\documentclass[12pt,reqno]{amsart}
\usepackage{amssymb,latexsym,graphicx,amscd,upgreek}
\usepackage{lscape}
\usepackage[all]{xy}
\usepackage{color}
\usepackage{epic,eepic}
\usepackage{xspace}
\usepackage{mathrsfs}
\usepackage{setspace}
\usepackage{cases}
\usepackage{bbm}
\usepackage{hyperref}
\hypersetup{
    colorlinks,
    linkcolor={blue},
    citecolor={blue},
    urlcolor={blue!80!black}
}

\newtheorem{Thm}{Theorem}[section]
\newtheorem{Lem}[Thm]{Lemma}
\newtheorem{Cor}[Thm]{Corollary}
\newtheorem{Prop}[Thm]{Proposition}

\theoremstyle{definition}

\newcommand{\Example}{\noindent{\bf Example}}

\newcommand{\Remarks}{\noindent{\bf Remarks}}

\newcommand{\Z}{\mathbb{Z}}

\newcommand{\N}{\mathbb{N}}

\newcommand{\df}{\colon}

\newcommand{\cI}{{\mathcal I}}

\newcommand{\cO}{{\mathcal O}}
\newcommand{\cP}{{\mathcal P}}

\newcommand{\cZ}{{\mathcal Z}}

\newcommand{\bd}{\mathbf{d}}

\newcommand{\rk}{\operatorname{rk}}

\newcommand{\md}{\operatorname{mod}}

\newcommand{\gldim}{\operatorname{gl.dim}}
\newcommand{\pdim}{\operatorname{proj.dim}}
\newcommand{\idim}{\operatorname{inj.dim}}

\newcommand{\dimv}{\underline{\dim}}

\newcommand{\soc}{\operatorname{soc}}

\newcommand{\tp}{\operatorname{top}}

\newcommand{\proj}{\operatorname{proj}}
\newcommand{\inj}{\operatorname{inj}}

\newcommand{\Hom}{\operatorname{Hom}}
\newcommand{\Ext}{\operatorname{Ext}}

\newcommand{\End}{\operatorname{End}}

\newcommand{\reg}{\operatorname{reg}}

\newcommand{\taureg}{\operatorname{\tau-reg}}

\newcommand{\Ima}{\operatorname{Im}}

\newcommand{\ov}{\overline}

\newcommand{\bil}[1]{\langle #1\rangle}

\newcommand{\op}{{\rm op}}

\newcommand{\irr}{\operatorname{Irr}}

\newcommand{\bsm}{\begin{smallmatrix}}
\newcommand{\esm}{\end{smallmatrix}}

\newcommand{\bbsm}{\left[\begin{smallmatrix}}
\newcommand{\besm}{\end{smallmatrix}\right]}

\newcommand{\bbm}{\begin{matrix}}
\newcommand{\ebm}{\end{matrix}}

\newcommand{\calZ}{\mathcal{Z}}

\begin{document}

\date{12.10.2026}

\title{$E$-symmetric algebras I}

\author{Grzegorz Bobi\'nski}
\address{Grzegorz Bobi\'nski\newline
Faculty of Mathematics and Computer Science\newline
Nicolaus Copernicus University\newline
ul. Chopina 12/18\newline
87-100 Toru\'n\newline
Poland}
\email{gregbob@mat.umk.pl}

\author{Jan Schr\"oer}
\address{Jan Schr\"oer\newline
Mathematisches Institut\newline
Universit\"at Bonn\newline
Endenicher Allee 60\newline
53115 Bonn\newline
Germany}
\email{schroer@math.uni-bonn.de}

\subjclass[2020]{Primary 16E65, 16G99; Secondary 13F60}

\begin{abstract}
We study various symmetry conditions related to the Auslander-Reiten formula for finite-dimensional algebras.
In particular, we define a new class of algebras which we call $E$-symmetric.
A theorem by Derksen--Weyman--Zelevinksy implies that
all Jacobian algebras are $E$-symmetric. 
We prove the converse for the class of monomial algebras.
\end{abstract}

\maketitle

\setcounter{tocdepth}{1}
\numberwithin{equation}{section}
\tableofcontents

\parskip2mm


\section{Introduction and main results}\label{sec:1}


Throughout, $K$ denotes an algebraically closed field, and
$A$ is always a finite-dimensional associative $K$-algebra.
Without loss of generality we assume that $A = KQ/I$, where $KQ$ is the path algebra of a quiver $Q$ and $I$ is an admissible ideal of $KQ$.
Let $\md(A)$ be the category of finite-dimensional left $A$-modules.

\subsection{Auslander-Reiten formula and
$E$-invariants}
The \emph{Auslander-Reiten translation} $\tau_A$ gives rise 
to equivalences 
$$
\xymatrix{
\underline{\md}(A) \ar@/^1ex/[r]^\tau & \ar@/^1ex/[l]^{\tau^-} 
\overline{\md}(A)
}
$$
of stable module categories.
For a proof of the following theorem (and the missing definitions) ,
we refer to \cite[Section~IV.4]{ARS97} or
\cite[Section~III.6]{SY11}.

\begin{Thm}[{Auslander, Reiten}]\label{thm:ARformula}
For $M,N \in \md(A)$, we have
$$
\Ext_A^1(M,N) \cong D\overline{\Hom}_A(N,\tau_A(M))
\cong D\underline{\Hom}_A(\tau_A^{-}(N),M).
$$
\end{Thm}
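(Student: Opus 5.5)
The plan is to prove the first isomorphism $\Ext_A^1(M,N)\cong D\overline{\Hom}_A(N,\tau_A M)$ directly. The second one, $\Ext^1_A(M,N)\cong D\underline{\Hom}_A(\tau_A^-N,M)$, then follows by the dual argument, applying the first to $A^{\op}$ and the duality $D$ with $\tau_{A^{\op}} \leftrightarrow \tau_A^-$.

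We may assume $M$ has no projective summands, since such summands contribute nothing to either side. Take a minimal projective presentation
$$P_1\xrightarrow{p} P_0\to M\to 0.$$
Applying the Nakayama functor $\nu=D\Hom_A(-,A)$ gives the defining exact sequence
$$0\to\tau_A M\to \nu P_1\xrightarrow{\nu p}\nu P_0.$$
The key tool is the functorial isomorphism $D\Hom_A(P,X)\cong \Hom_A(X,\nu P)$ for projective $P$. It comes from $\Hom_A(P,X)\cong \Hom_A(P,A)\otimes_A X$ and tensor--hom adjunction.

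The main steps are as follows.
\begin{enumerate}
\item Define the transpose $\operatorname{Tr}M=\operatorname{Coker}\Hom_A(p,A)$, so that $\tau_A M = D\operatorname{Tr}M$.
\item Establish the key lemma: for any $X$ there is an exact sequence
$$0\to \underline{\Hom}_A(M,X)\to ... $$
More usefully, the standard formula $\operatorname{Tor}_1^A(\operatorname{Tr}M,X)\cong \underline{\Hom}_A(M,X)$ comes from the sequence
$$P_0^*\otimes X\to P_1^*\otimes X\to \operatorname{Tr}M\otimes X\to 0,$$
compared with $0\to\Hom(M,X)\to\Hom(P_0,X)\to\Hom(P_1,X)$. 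The maps factoring through projectives correspond to the image of $P_0^*\otimes X\to\Hom(M,X)$.
\item Dualize this to obtain $D\underline{\Hom}_A(M,X)\cong \Ext^1_A(X,\tau_A M)$, using $D\operatorname{Tor}_1^A(\operatorname{Tr}M,X)\cong\Ext^1_A(X,D\operatorname{Tr}M)$.
\item Swap the roles of the variables to reach the stated form. This needs a separate argument: show that $\Ext^1_A(M,N)\cong D\overline{\Hom}_A(N,\tau M)$ by dimension shifting on $M$.
\end{enumerate}

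For step 4, write $0\to\Omega M\to P_0\to M\to 0$, which gives
$$\Ext^1(M,N)=\operatorname{Coker}\bigl(\Hom(P_0,N)\to\Hom(\Omega M,N)\bigr).$$
Then apply $D$ and the Nakayama isomorphism, identifying $D\Hom(P_0,N)\cong\Hom(N,\nu P_0)$. The goal is to recognize the kernel of
$$\Hom(N,\nu P_1)\to\Hom(N,\nu P_0)$$
as $\Hom(N,\tau M)$. The image coming from injectives must then be exactly the maps factoring through injectives.

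I expect the main obstacle to be this last identification. One must check carefully that $D\Ext^1_A(M,N)$ is the quotient of $\Hom_A(N,\tau_AM)$ by $\mathcal{I}(N,\tau_AM)$. The cleanest route is to use a defect argument. For a short exact sequence $0\to N\to E\to M\to 0$, compare the contravariant defect computed via $\Hom(-,\tau M)$ with the covariant defect computed via $\Hom(M,-)$. Both defects are expressed through $\nu$ and the presentation. Equality of the two gives the formula, and it is precisely where the quotient by injectively factoring maps appears.
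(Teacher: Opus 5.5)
The paper does not prove this theorem; it only cites the literature for it. Judged on its own, your proposal has a genuine gap in step 4, and everything depends on that step, since you derive the second isomorphism from the first by duality. Dualizing $\Hom_A(P_0,N)\to\Hom_A(\Omega M,N)\to\Ext^1_A(M,N)\to 0$ and using the Nakayama isomorphisms does show $D\Ext^1_A(M,N)$ as a quotient of $\Hom_A(N,\tau_AM)=\ker\bigl(\Hom_A(N,\nu P_1)\to\Hom_A(N,\nu P_0)\bigr)$. The whole content of the formula, however, is that the subspace divided out is exactly $\mathcal{I}(N,\tau_AM)$. One inclusion is easy: a map from $\Omega M$ into an injective extends to $P_0$. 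The reverse inclusion, that every $f\colon N\to\tau_AM$ dying in $D\Ext^1_A(M,N)$ factors through an injective, is the real work. You only assert that it ``must'' hold and defer it to a defect argument that you never carry out. The defect formula carries the same content as the Auslander--Reiten formula, so it cannot just be invoked here.

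You do not need step 4: steps 1--3 already finish the proof if you \emph{substitute} rather than ``swap''. They give $\Ext^1_A(X,\tau_AM)\cong D\underline{\Hom}_A(M,X)$ for all $M,X$. Put $M=\tau_A^-N$.
\begin{itemize}
\item If $N$ has no injective summands, then $DN$ has no projective summands. Hence $\operatorname{Tr}\operatorname{Tr}DN\cong DN$, with $\operatorname{Tr}$ computed from minimal presentations, and so $\tau_A\tau_A^-N\cong N$. This yields $\Ext^1_A(X,N)\cong D\underline{\Hom}_A(\tau_A^-N,X)$, which is the second isomorphism.
\item Injective summands of $N$ contribute $0$ to both sides.
\item The first isomorphism follows by applying the second to $A^{\op}$ and the pair $DN,DM$. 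Use $D\tau_AM\cong\operatorname{Tr}M=\tau^-_{A^{\op}}(DM)$, and note that $D$ identifies $\overline{\Hom}_A(N,\tau_AM)$ with $\underline{\Hom}_{A^{\op}}(D\tau_AM,DN)$.
\end{itemize}
So the direction of your duality step should be reversed.

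One correction in step 2: $P_0^*\otimes_AX\cong\Hom_A(P_0,X)$ does not map to $\Hom_A(M,X)$. To compute $\operatorname{Tor}_1$ you need one more projective $Q\to P_0^*$ with image $M^*=\Hom_A(M,A)$. Then $\operatorname{Tor}_1^A(\operatorname{Tr}M,X)\cong\Hom_A(M,X)/\Ima\bigl(M^*\otimes_AX\to\Hom_A(M,X)\bigr)$. The image of this evaluation map is $\mathcal{P}(M,X)$, because a map factors through a projective module if and only if it factors through some free module $A^m$.
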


For $M,N \in \md(A)$, we define
$$
\hom_A(M,N) := \dim \Hom_A(M,N).
$$

Let
\begin{align*}
E(M,N) &:= \hom_A(N,\tau_A(M)),
\\
E^-(M,N) &:= \hom_A(\tau_A^{-}(N),M).
\end{align*}
Furthermore, set
$$
E(M) := E(M,M) \text{\quad and \quad}
E^-(M) := E^-(M,M).
$$
These numbers are called \emph{$E$-invariants} and
play an important role in Derksen--Weyman--Zelevinsky's
\cite{DWZ08, DWZ10} representation theoretic approach to 
Fo\-min--Zelevinsky cluster algebras.

\subsection{$\tau$-regular modules}
Let $n := |Q_0|$ be the number of vertices of $Q$.
This is then also the number of isomorphism classes of simple $A$-modules.
For a dimension vector $\bd \in \N^n$, let
$\md(A,\bd)$ be the affine variety of $A$-modules
with dimension vector $\bd$.
The corresponding product $G_\bd$ of general linear groups
acts on $\md(A,\bd)$ by conjugation.
The orbits of this action correspond to the isomorphism classes of $A$-modules with dimension vector $\bd$.
For $M \in \md(A,\bd)$, we denote its orbit by $\cO_M$.
Let $\irr(A,\bd)$ be the set of irreducible components of 
$\md(A,\bd)$, and let
$$
\irr(A) := \bigcup_{\bd \in \N^n} \irr(A,\bd).
$$

For $M \in \md(A,\bd)$, let
$$
\md_M(A,\bd) :=
\bigcup_{\substack{\calZ \in \irr(A,\bd)\\M \in \calZ}} \calZ,
$$
and define
$$
c(M) := \dim \md_M(A,\bd) - \dim \cO_M.
$$
Then 
$$
c(M) \le E(M)
$$ 
by Voigt's Lemma and the Auslander-Reiten
formula.
We call $M$ \emph{$\tau$-regular} (resp. \emph{$\tau^-$-regular}) 
if
$$
c(M) = E(M) \qquad(\text{resp. $c(M) = E^-(M)$}).
$$
Let $\taureg(A)$ be the subcategory of all $\tau$-regular modules in $\md(A)$.
These modules generalize $\tau$-rigid modules (= the protagonists of $\tau$-tilting theory) and modules $M$ with
$\pdim(M) \le 1$.

Studying $\tau$-regular modules corresponds to studying modules which have projective presentations of maximal rank, see Section~\ref{subsec:maximal} for details.

For each dimension vector $\bd$,
the $\tau$-regular modules in $\md(A,\bd)$ form an open subset,
see \cite[Lemma~4.4(ii)]{BS25}.
The Zariski closure  of this open subset is
therefore a union of irreducible components of $\md(A,\bd)$.
The set of these components is denoted by $\irr^\tau(A,\bd)$.
The components in $\irr^\tau(A,\bd)$ are called 
\emph{generically $\tau$-regular} or just 
\emph{$\tau$-regular} for short.
This class of irreducible components was first introduced in \cite{GLS12}.

Dually, one defines $\tau^-$-$\reg(A)$  and
$\irr^{\tau^-}(A,\bd)$.

A module $M \in \md(A)$ is
\emph{$\tau$-rigid} 
if
$$
E(M) = 0.
$$
In this case, $M$ is $\tau$-regular, and we have $\overline{\cO_M} \in \irr^\tau(A)$.

\subsection{$1$-Iwanaga-Gorenstein algebras}
Let 
\begin{align*}
\cP_{\le 1}(A) &:= \{ M \in \md(A) \mid \pdim(M) \le 1 \},
\\
\cI_{\le 1}(A) &:= \{ M \in \md(A) \mid \idim(M) \le 1 \}.
\end{align*}
Analogously, one defines $\cP_{< \infty}(A)$ and
$\cI_{< \infty}(A)$.
An algebra $A$ is \emph{$1$-Iwanaga-Gorenstein}
if
$$
\pdim(D(A_A)) \le 1
\text{\quad and \quad} 
\idim({_A}A) \le 1.
$$
This is equivalent to the condition
$$
\cP_{\le 1}(A) = \cI_{\le 1}(A) = \cP_{< \infty}(A)
= \cI_{< \infty}(A).
$$

Trivial examples of $1$-Iwanaga-Gorenstein algebras are
selfinjective algebras and hereditary algebras.
But there are many more.

\subsection{Four symmetry conditions}
Our aim is to study the following symmetry conditions:
\begin{itemize}\itemsep2mm

\item[(S1)]
$E(M) = E^-(M)$ for all $M \in \md(A)$;

\item[(S2)]
$\taureg(A) = \tau^-$-$\reg(A)$;

\item[(S3)]
$\irr^\tau(A) = \irr^{\tau^-}(A)$;

\item[(S4)]
$A/AeA$ is $1$-Iwanaga-Gorenstein for each $e = e^2$ in
$A$.

\end{itemize}
We have
$$
\text{(S1)} \implies \text{(S2)} \implies \text{(S3)}
\implies \text{(S4)}.
$$
The first and second implications follow directly from the definitions, whereas 
the third implication is non-trivial and corresponds to
\cite[Theorem~1.7]{BS25}.

We call $A$ an \emph{$E$-symmetric algebra} if it satisfies \text{(S1)}.

\Remarks:
\begin{itemize}\itemsep2mm

\item[(i)]
Let $N \in \md(A)$ be indecomposable,
and let
$$
\tau^{\Z}(N) := \{ \tau_A^i(N) \mid i \in \Z \} \setminus \{ 0 \}.
$$
Condition (S1) implies that
$E(M)$
is constant for all $M \in \tau^{\Z}(N)$.

\item[(ii)]
At present it is an open problem if 
$\text{(S3)} \implies \text{(S2)}$ holds. (This question was first posed by Pfeifer \cite{Pf25}.)

\item[(iii)]
The implication $\text{(S2)} \implies \text{(S1)}$
does not hold in general:
Let $A = KQ/I$ where $Q$ is the quiver
$$
\xymatrix{
1 \ar@(ul,dl)[]_a \ar@(ur,dr)[]^b
}
$$
and $I$ is generated by $\{ a^2,\; b^2,\; ab-ba \}$.
This is a local selfinjective algebra, and we have
$$
\taureg(A)  = \proj(A) = \inj(A) = \text{$\tau^-$-$\reg(A)$}.
$$
(These equalities hold for all local selfinjective algebras.)
Let $P = {_A}A$, and let $M = P/\soc(P)$.
Then 
$E(M) = 3$ and $E^-(M) = 6$.
Thus $A$ is not $E$-symmetric.

\item[(iv)]
The implication $\text{(S4)} \implies \text{(S3)}$
does not hold in general:
Let $A = KQ/I$ where $Q$ is the quiver
$$
\xymatrix{
1 \ar@/^1ex/[r]^a & 2 \ar@/^1ex/[l]^b
}
$$
and $I$ is generated by $\{ ab,\; ba \}$.
Note that $A$ is selfinjective.
The Auslander-Reiten quiver of $A$ looks as follow:
$$
\xymatrix@-2ex{
& {\bbm 2\\1 \ebm} \ar[dr]
&& {\bbm 2\\1 \ebm} \ar[dr]
\\
{\bbm 1 \ebm}\ar[ur]
&&
{\bbm 2 \ebm}\ar[ur]\ar@{-->}[ll]
&&
{\bbm 1 \ebm}\ar@{-->}[ll]
}
$$
Clearly $A$ satisfies (S4).
Let 
$$
M = {\bbm 2\\1 \ebm} \oplus {\bbm 2 \ebm}.
$$
Then 
$E(M) = 0$ and $E^-(M) = 1$.
It follows that $\calZ := \ov{\cO_M} \in \irr^\tau(A)$ and
$\calZ \notin \irr^{\tau^-}(A)$.

\end{itemize}

\subsection{Jacobian algebras are $E$-symmetric}
Under certain finiteness conditions, the following theorem is due to Derksen--Weyman--Zelevinsky \cite[Proposition~7.3, Corollary~10.9]{DWZ10}.
A proof of the general case can be found in \cite{GHLS26}.
In particular, we do not need to assume that $Q$ is $2$-acyclic or that
${\rm char}(K) = 0$.

\begin{Thm}[\cite{DWZ10}, \cite{GHLS26}]\label{thm:jacobian}
Let $Q$ be a quiver, and let $\tilde{W}$ be a reduced hyperpotential
for $Q$.
Then the Jacobian algebra $\cP(Q,\tilde{W})$
satisfies 
$$
E(M) = E^-(M)
$$
for all $M \in \md(\cP(Q,\tilde{W}))$.
\end{Thm}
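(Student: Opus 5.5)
The plan follows the Derksen--Weyman--Zelevinsky strategy: realize $E(M)$ and $E^-(M)$ through homological data of $M$ that can be compared directly for $A=\cP(Q,\tilde W)$. First I compute $E(M)$ from a minimal projective presentation
$$
P_1 \xrightarrow{f} P_0 \to M \to 0.
$$
Applying $\Hom_A(-,M)$ and using $\tau_A(M)=D\operatorname{Coker}(\Hom_A(f,A))$ gives
$$
E(M)=\hom_A(M,\tau_A M)=\hom_A(M,M)-\hom_A(P_0,M)+\hom_A(P_1,M).
$$
This is the standard identity $E(M)=\dim\Hom(M,M)-\langle \text{$g$-vector}\rangle$ type formula. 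Dually, from a minimal injective copresentation $0\to M\to I_0\to I_1$,
$$
E^-(M)=\hom_A(M,M)-\hom_A(M,I_0)+\hom_A(M,I_1).
$$
It therefore suffices to show that
$$
\hom_A(P_0,M)-\hom_A(P_1,M)=\hom_A(M,I_0)-\hom_A(M,I_1).
$$
Writing $P_j=\bigoplus_i P_i^{a_{ji}}$ and $I_j=\bigoplus_i I_i^{b_{ji}}$, both sides become $\sum_i (a_{0i}-a_{1i})d_i$ and $\sum_i (b_{0i}-b_{1i})d_i$, where $\bd=(d_i)$ is the dimension vector of $M$. So the goal is the equality of these two linear forms evaluated on $\bd$; the $g$-vector and the dual $g$-vector of $M$ need not coincide, only these pairings.

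Next I use the structure of the Jacobian algebra to compute the multiplicities explicitly. The module $M$ corresponds to a decorated representation with zero decoration. The generators of $P_0$ are given by $\dim\tp(M)_i$, and the relations $P_1$ at $i$ are given by $\dim\Ext^1(M,S_i)$. For the Jacobian algebra, the $\Ext^1(M,S_i)$ are computed from the complex at vertex $i$,
$$
M_{\rm in}(i)\xrightarrow{\;\gamma_i\;}M_{\rm out}(i),
$$
built from incoming and outgoing arrows and the cyclic derivatives. This is DWZ's triangle $M_{\rm in}\xrightarrow{\alpha}M(i)\xrightarrow{\beta}M_{\rm out}$, with $\gamma_i$ induced by $\partial_a\tilde W$. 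The presentation is exact except at the leading terms: the relations of $\cP(Q,\tilde W)$ are exactly the cyclic derivatives, one for each arrow. Hence the truncated complex $\bigoplus_{a:j\to i}P_i\to\bigoplus_{a:i\to j}P_j\to P_i$ resolves $S_i$ up to the third step, and the third step is dual to the first. Using this, I express $a_{0i}-a_{1i}$ and $b_{0i}-b_{1i}$ as $\dim\ker\beta_i$, $\dim\operatorname{coker}\alpha_i$, and so on, plus correction terms involving $\ker\gamma_i/\operatorname{im}\beta_i$ and $\ker\alpha_i/\operatorname{im}\gamma_i$. This is the DWZ $h$-vector formula, $E(M)=\sum_i d_i\,h_i(M)+\hom(M,M)$ type.

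The symmetry then comes from self-duality of the potential. The map $\gamma$ is given by the second derivatives of $\tilde W$, so the transpose of $\gamma$ in $D M$ over $A^{\op}=\cP(Q^{\op},\tilde W^{\op})$ is again the corresponding $\gamma$. The combinatorial difference between the two sides should reduce to $\sum_i d_i(\dim M_{\rm out}(i)-\dim M_{\rm in}(i))$ plus a term antisymmetric under the arrow pairing. The first sum equals $\sum_{a:i\to j}(d_id_j-d_jd_i)=0$, and the second term cancels by the symmetry of $\gamma$.

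I expect the main obstacle to be this middle step. It requires a precise identification of minimal presentations (equivalently $\dim\Ext^1(M,S_i)$ and $\dim\Ext^1(S_i,M)$) in terms of the complexes at each vertex, including the well-definedness of $\gamma$ for hyperpotentials in arbitrary characteristic and without 2-acyclicity. I would first try to establish the partial resolution $P_i\to\bigoplus P_j\to\bigoplus P_j\to P_i$ of $S_i$ with dual map matrices, following Ginzburg's bimodule complex, and then deduce everything by applying $\Hom(-,M)$.
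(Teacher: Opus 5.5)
Your proposal is correct. It is essentially the Derksen--Weyman--Zelevinsky argument behind the results the paper cites; the paper itself gives no proof. The multiplicities $a_{0k},a_{1k},b_{0k},b_{1k}$ are $\dim\operatorname{coker}\alpha_k$, $\dim(\ker\alpha_k/\operatorname{im}\gamma_k)$, $\dim\ker\beta_k$ and $\dim(\ker\gamma_k/\operatorname{im}\beta_k)$. Hence the two pairings differ by exactly $\sum_k d_k(\dim M_{\rm out}(k)-\dim M_{\rm in}(k))=0$.

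Two remarks. First, there is no extra antisymmetric term, provided you check that both $\Ext^1$ computations produce the same $\gamma_k$. For a hyperpotential $(\rho_a)_a$, write $\rho_x=\sum_y\rho_x^{(y)}y$ and $\rho_y=\sum_x x\,{}^{(x)}\rho_y$. Comparing coefficients in the commutator condition $\sum_a[a,\rho_a]=0$ gives exactly $\rho_x^{(y)}={}^{(x)}\rho_y$, independently of the characteristic and of $2$-acyclicity. Second, you only need exactness of $\bigoplus_{t(a)=k}P_{s(a)}\to\bigoplus_{s(b)=k}P_{t(b)}\to P_k\to S_k\to 0$. The longer four-term complex you propose to establish is not exact in general, already for $KQ$ with $Q\colon 1\to 2$.
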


We are only interested in the case where $\cP(Q,\tilde{W})$ is finite-dimensional.
But Theorem~\ref{thm:jacobian} holds in general,
see \cite{GHLS26} for details.
The notion of a hyperpotential is due to Ladkani \cite{L14}.

Our aim is to show that for some classes of algebras being $E$-symmetric is equivalent to being a Jacobian algebra.


\subsection{Main results}
We call $A = KQ/I$ a \emph{monomial algebra} if the admissible ideal $I$ 
is generated by a set of paths (of length at least two) in $Q$.
These algebras form an important class of finite-dimensional algebras. They admit a combinatorial description of their projective and injective modules, which makes them accessible to homological computations.

\begin{Thm}\label{thm:main1}
Let $A = KQ/I$ be a monomial algebra.
Then the following are equivalent:
\begin{itemize}\itemsep2mm

\item[(i)]
$E(M) = E^-(M)$ for all $M \in \md(A)$ {\rm (= (S1))};

\item[(ii)]
$\taureg(A) = \text{$\tau^-$-$\reg(A)$}$ {\rm (= (S2))};

\item[(iii)]
$\irr^\tau(A) = \irr^{\tau^-}(A)$ {\rm (= (S3))};

\item[(iv)]
$A \cong \cP(Q,\tilde{W})$ for some hyperpotential $\tilde{W}$.

\end{itemize}
\end{Thm}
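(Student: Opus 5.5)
The implications (i) $\Rightarrow$ (ii) $\Rightarrow$ (iii) hold for arbitrary algebras by the chain (S1) $\Rightarrow$ (S2) $\Rightarrow$ (S3) discussed above. The implication (iv) $\Rightarrow$ (i) is Theorem~\ref{thm:jacobian}, once we note that a Jacobian algebra $\cP(Q,\tilde{W})$ that is isomorphic to a monomial algebra may be assumed to come from a reduced hyperpotential. So the real content is (iii) $\Rightarrow$ (iv) for monomial $A = KQ/I$. Via the implication (S3) $\Rightarrow$ (S4) of \cite[Theorem~1.7]{BS25}, it suffices to prove the following: if $A/AeA$ is $1$-Iwanaga-Gorenstein for every idempotent $e$, then $A$ is Jacobian. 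Since $A/AeA$ is again monomial, with quiver obtained by deleting vertices, we obtain a hereditary-style induction tool: every full subquiver, with the induced monomial relations, is $1$-Iwanaga-Gorenstein.

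The plan is to classify combinatorially the monomial algebras all of whose idempotent quotients are $1$-Iwanaga-Gorenstein. For monomial algebras, projectives and injectives are described by paths. The condition $\idim({_A}A)\le 1$ and $\pdim(D(A_A))\le 1$ translates into conditions on the minimal relations (the generating paths of $I$) via the Green–Happel–Zacharia description of minimal projective resolutions. In particular, a first syzygy of an injective must be projective, so "overlapping" chains of relations of length $\ge 2$ are forbidden. I would first treat the case of a quiver with one or two vertices to isolate the forbidden local configurations, for instance relations $p$ of length $\ge 3$, or a relation $\alpha\beta$ without a compatible relation closing a cycle. Applying the idempotent-quotient condition to small full subquivers then forces the following global picture:
\begin{itemize}
\item every minimal relation is a path of length exactly $2$, say $\alpha\beta$;
\item every such relation lies on an oriented cycle $\alpha\beta\gamma$ of length $3$, or more generally on a cycle $c$ in which all consecutive length-$2$ subpaths are relations;
\item each arrow lies in at most one such cycle in each direction, i.e.\ it is followed by at most one arrow via a zero relation, in a gentle/"triangle"-like fashion.
\end{itemize}

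With this normal form, one constructs the hyperpotential $\tilde{W}$ as the sum of these relation cycles, adding Ladkani's hyperpotential terms for cycles of length $2$ or loops, where ordinary potentials fail. The final step is to check that the cyclic derivatives $\partial_\alpha \tilde{W}$ generate exactly $I$. For a cycle $\alpha_1\cdots\alpha_m$ all of whose consecutive pairs are relations, $\partial_{\alpha_i}$ of the cycle is a path of length $m-1$. This path must then be in $I$; it is if $m\ge 3$, and the case $m = 3$ gives precisely the length-$2$ relations. Longer cycles would give only longer paths, so in the classification step I must show that long cycles of relations reduce to cases covered by the hyperpotential formalism, or that they are excluded by (S4) applied to suitable quotients.

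I expect the main obstacle to be this classification step: showing that (S4) alone forces every minimal relation to be "cyclically closed" in the Jacobian way. It may turn out that (S4) is too weak, since Remark (iv) already shows that (S4) does not imply (S3). In that case I would fall back on (S3) directly, exhibiting for each forbidden configuration an explicit module $M$, built from string modules and $\tau$-rigid in a small subquiver, with $E(M)=0 \ne E^-(M)$, in the spirit of Remark (iv). This would show that $\overline{\cO_M}$ lies in $\irr^\tau(A)\setminus \irr^{\tau^-}(A)$.
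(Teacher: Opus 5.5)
\textbf{There is a genuine gap: the classification you aim for is false, and the step that uses (S3) is missing.} Your outer reduction agrees with the paper. (i)$\Rightarrow$(ii)$\Rightarrow$(iii) is formal, (iv)$\Rightarrow$(i) is Theorem~\ref{thm:jacobian}, and for (iii)$\Rightarrow$(iv) one first passes to (S4) via \cite[Theorem~1.7]{BS25}.

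\textbf{The predicted normal form is wrong.} For monomial algebras, (S4) does not force minimal relations of length $2$, and it does not forbid overlapping relations. By Theorem~\ref{thm:monomial2} (a refinement of Lu--Zhu), (S4) says exactly the following: $Q_1$ is the disjoint union of arrows lying in no relation and of arrow-disjoint, multiplicity-free relation cycles $C_i$; the minimal relations are precisely the subpaths of length $r_i$ of $C_i^{r_i}$; and $r_i \ge |C_i|-1$. So relations of length $\ge 3$ are forced as soon as $|C_i|\ge 4$, and heavily overlapping relations are the norm. Your bullet list would rule out honest Jacobian algebras, for example:
\begin{itemize}
\item $K[x]/(x^3)$;
\item an oriented $4$-cycle with all paths of length $3$ zero (potential $C$).
\end{itemize}
The correct target is that each $r_i$ has the form $t_i|C_i|-1$ with $t_i\ge 1$. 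The cyclic derivatives of $C_i^{t_i}$ are, up to the scalar $t_i$, exactly the paths of length $t_i|C_i|-1$ along $C_i$; this scalar is why hyperpotentials are needed in positive characteristic. In that case the associated relations form a reduced hyperpotential. Your sketch of the derivative check ("it is if $m\ge 3$, and the case $m=3$ gives the length-$2$ relations") does not capture this condition.

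\textbf{The missing idea.} You need to force $r_i \equiv -1 \pmod{|C_i|}$, and this cannot come from (S4) at all. If $r\ge |C|-1$, every relation of $C$ runs through every vertex of $C$, so deleting any vertex kills all of them. Hence a relation cycle with $r\ge |C|$ and $r\not\equiv -1 \pmod{|C|}$ passes every (S4)-test; the example $|C|=r=2$ in Remark~(iv) of the introduction is one such case. You rightly suspect this and propose falling back on (S3), but you name neither the configurations to be killed nor the mechanism that makes a local counterexample suffice.

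The paper does it as follows:
\begin{itemize}
\item Since (S3) passes to $A/AeA$ (Corollary~\ref{cor:reduction4}), one may keep only the vertices of a single relation cycle $C=a_s\cdots a_1$ with $r\ge s$.
\item Finite-dimensionality then excludes any further non-loop arrow, since a chord $b\colon 1\to i$ gives $(a_s\cdots a_i b)^l\notin I$ for all $l$.
\item It also excludes more than one loop, and excludes a loop $h$ when $r>s$.
\item A single loop $h$ with $h^p\in R$ in the case $r=s$ is killed by the uniserial module $M=M(a_{s-1}\cdots a_1h^{p-1})$. It has $E(M)=0\neq p-1=E^-(M)$, so $\overline{\cO_M}\in\irr^\tau(A)\setminus\irr^{\tau^-}(A)$.
\item What remains is a cyclic Nakayama algebra, where \cite[Proposition~1.10]{BS25} gives $r=ts-1$.
\end{itemize}
Without this argument, or an equivalent one, the implication (iii)$\Rightarrow$(iv) is not established by your proposal.
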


It follows from results in \cite{BS25} that the analogous result holds for all algebras of finite global dimension and that in this case, the 
Jacobian algebra $\cP(Q,\tilde{W})$ in (iv) is in fact isomorphic to the path algebra $KQ$.
Here is the precise statement:

\begin{Prop}\label{prop:introhereditary1}
Suppose that $\gldim(A) < \infty$.
Then the following are equivalent:
\begin{itemize}\itemsep2mm

\item[(i)]
$E(M) = E^-(M)$ for all $M \in \md(A)$;

\item[(ii)]
$\taureg(A) = \text{$\tau^-$-$\reg(A)$}$;

\item[(iii)]
$\irr^\tau(A) = \irr^{\tau^-}(A)$;

\item[(iv)]
$A$ is hereditary.

\end{itemize}
\end{Prop}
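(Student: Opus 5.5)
We prove the cycle (iv)$\implies$(i)$\implies$(ii)$\implies$(iii)$\implies$(iv). The implications (i)$\implies$(ii)$\implies$(iii) are the general implications (S1)$\implies$(S2)$\implies$(S3) noted above, so only two steps need an argument.

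For (iv)$\implies$(i), we use the Auslander--Reiten formula, Theorem~\ref{thm:ARformula}, directly, without passing through Theorem~\ref{thm:jacobian}. If $A$ is hereditary, then there are no nonzero maps from injective modules to non-injective indecomposables, because images of injectives are injective. Hence $\overline{\Hom}_A(M,\tau_A(M)) = \Hom_A(M,\tau_A(M))$, since $\tau_A(M)$ has no injective summands. Dually, $\underline{\Hom}_A(\tau_A^-(M),M) = \Hom_A(\tau_A^-(M),M)$, since submodules of projectives are projective and $\tau_A^-(M)$ has no projective summands. Therefore
$$
E(M) = \dim \Ext_A^1(M,M) = E^-(M)
$$
for all $M \in \md(A)$. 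Alternatively, $A \cong KQ = \cP(Q,0)$ with the zero hyperpotential, and Theorem~\ref{thm:jacobian} applies.

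The substantive step is (iii)$\implies$(iv) under $\gldim(A) < \infty$. By the non-trivial implication (S3)$\implies$(S4), i.e.\ \cite[Theorem~1.7]{BS25}, condition (iii) forces $A/AeA$ to be $1$-Iwanaga--Gorenstein for every idempotent $e$. Taking $e = 0$, $A$ itself is $1$-Iwanaga--Gorenstein, so $\cP_{<\infty}(A) = \cP_{\le 1}(A)$. Finite global dimension gives $\cP_{<\infty}(A) = \md(A)$. Hence every $A$-module has projective dimension at most $1$, so $\gldim(A) \le 1$ and $A$ is hereditary.

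The main obstacle is thus entirely contained in the cited Theorem~1.7 of \cite{BS25}, which we take as given. The only point to check is that the idempotent $e = 0$ is allowed in (S4), giving $A/AeA = A$. If the convention excluded it, we would instead use the idempotent $e$ supported at a sink vertex and induct on $|Q_0|$, but taking $e = 0$ should suffice.
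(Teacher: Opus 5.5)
Your proof is correct and follows the paper's route. For (iv)$\implies$(i) you combine the Auslander--Reiten formula with the vanishing of maps into $\tau_A(M)$ that factor through injectives (resp.\ out of $\tau_A^-(M)$ through projectives), which is what the paper does via Proposition~\ref{prop:hereditary2} and Lemma~\ref{lem:pdim1stable}; for (iii)$\implies$(iv) the paper cites \cite[Corollary~1.8]{BS25} directly, while you derive it from \cite[Theorem~1.7]{BS25} with $e=0$ (which is allowed, and the paper uses this case itself in proving Theorem~\ref{thm:monomial2}) together with $\cP_{<\infty}(A)=\md(A)$, which is a valid unpacking.
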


The non-obvious parts of Proposition~\ref{prop:introhereditary1} are already proved 
in \cite{BS25}.

One can also ask when a stronger version of $E$-symmetry holds, i.e.
$E(M,N) = E(N,M)$ for all $A$-modules $M,N$.
But this can only happen in the hereditary case:

\begin{Prop}\label{prop:introhereditary2}
The following are equivalent:
\begin{itemize}\itemsep2mm

\item[(i)]
$E(M,N) = E^-(M,N)$ for
all $M,N \in \md(A)$;

\item[(ii)]
$A$ is hereditary.

\end{itemize}
\end{Prop}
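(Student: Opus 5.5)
Both directions run through the Auslander--Reiten formula (Theorem~\ref{thm:ARformula}) together with the hereditary description of $\tau_A$. For (ii)$\Rightarrow$(i): if $A$ is hereditary, every module has projective dimension at most $1$ and injective dimension at most $1$. So $\Hom_A(X,P)=0$ for every $X$ with no nonzero projective summand and every projective $P$; otherwise the image of a nonzero map would be a nonzero submodule of $P$, hence projective, and this would split off a projective summand of $X$. Dually, $\Hom_A(I,Y)=0$ for every injective $I$ and every $Y$ without injective summands. Therefore $\overline{\Hom}_A(N,\tau_A M)=\Hom_A(N,\tau_A M)$, since a map to $\tau_A M$ factoring through an injective must vanish ($\tau_A M$ has no injective summands). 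Likewise $\underline{\Hom}_A(\tau_A^- N,M)=\Hom_A(\tau_A^- N,M)$. By Theorem~\ref{thm:ARformula}, both $E(M,N)$ and $E^-(M,N)$ equal $\dim\Ext_A^1(M,N)$, so they are equal.

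For (i)$\Rightarrow$(ii), assume $E(M,N)=E^-(M,N)$ for all $M,N$, and show $\pdim(M)\le1$ for every $M$. The key identity, due to Auslander--Reiten, is
$$
\Hom_A(N,\tau_A M)\cong D\Ext^1_A(M,N)\ \text{ modulo a correction},
$$
namely $\dim\Hom_A(N,\tau_A M)=\dim\Ext^1_A(M,N)+\dim\Hom_A(P_0,N)-\dim\Hom_A(P_1,N)+\dim\Hom_A(M,N)$-type terms. The cleaner route is the exact sequence $0\to\Hom(N,\tau M)\to D\Hom(P_1,N)\to D\Hom(P_0,N)\to D\Hom(M,N)\to0$ coming from a minimal projective presentation $P_1\xrightarrow{f}P_0\to M\to0$. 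I will use the following standard consequence: $\pdim M\le1$ if and only if $\Hom_A(DA,\tau_A M)=0$. Dually, $\idim N\le1$ if and only if $\Hom_A(\tau_A^-N,A)=0$.

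Now specialize the hypothesis. Take $N=D(A_A)$, so that $E(M,DA)=\hom_A(DA,\tau_A M)$. On the other side, $\tau_A^-(DA)=0$ since $DA$ is injective, so $E^-(M,DA)=0$. Hence $\Hom_A(DA,\tau_A M)=0$ for all $M$, and therefore $\pdim M\le1$ for all $M$. Then $\gldim A\le 1$, i.e.\ $A$ is hereditary. (Symmetrically, taking $M={}_AA$ gives $E(A,N)=0$ and forces $\idim N\le1$, which serves as a consistency check.)

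The main step to verify carefully is the criterion $\pdim M\le 1 \iff \Hom_A(DA,\tau_A M)=0$ (\cite[IV.1.7]{ARS97}). This follows from applying $\nu=D\Hom_A(-,A)$ to the minimal presentation, which gives $0\to\tau_A M\to\nu P_1\to\nu P_0$. For indecomposable injective $I=\nu P$, a nonzero map $I\to\tau_A M$ corresponds to a nonzero map $P\to P_1$ killed by $f$. Such a map exists exactly when $f$ is not injective, and minimality of the presentation ensures that non-injectivity of $f$ is equivalent to $\pdim M>1$.
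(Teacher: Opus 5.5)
Your proof is correct. For (ii)$\Rightarrow$(i) it is the paper's argument, except that you reprove the needed case of Lemma~\ref{lem:pdim1stable} and its dual directly. You do this from the fact that, over a hereditary algebra, submodules of projectives are projective and quotients of injectives are injective.

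For (i)$\Rightarrow$(ii) you use the dual specialization. The paper takes $M=P$ projective and $N=\tau_A(U)$ for an indecomposable submodule $U\subseteq P$. Then $E(P,\tau_A U)=0$ since $\tau_A P=0$, while $E^-(P,\tau_A U)=\hom_A(\tau_A^-\tau_A U,P)=\hom_A(U,P)\neq 0$ if $U$ were non-projective. You instead take $N=D(A_A)$, so that the $E^-$ side vanishes. You then need the criterion $\pdim M\le 1\iff\Hom_A(DA,\tau_A M)=0$. Your sketch of it via $0\to\tau_A M\to\nu_A P_1\to\nu_A P_0$ and $\Hom_A(\nu_A P,\tau_A M)\cong\Hom_A(P,\ker f)$ is sound.

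The paper's choice $N=\tau_A(U)$ is exactly what makes this criterion unnecessary: it only uses $\tau_A P=0$ and $\tau_A^-\tau_A U\cong U$, so it is more self-contained. Your route is a one-line specialization once the criterion is granted, and it lands directly on $\pdim M\le 1$ for all $M$. Your ``consistency check'' with $M={}_AA$ is essentially the paper's route, passed through the dual criterion.

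One blemish: the displayed ``correction'' formula in your second paragraph is wrong as written. The exact sequence you state gives $\hom_A(N,\tau_A M)=\hom_A(M,N)-\hom_A(P_0,N)+\hom_A(P_1,N)$, with no $\Ext$ term. It plays no role in the argument and should just be deleted.
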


For selfinjective $E$-symmetric algebras, we have quite restrictive 
homological properties:

\begin{Prop}\label{prop:introselfinjective}
Let $A$ be selfinjective and $E$-symmetric.
We also assume that $A$ is connected and not semisimple.
Then for each simple $A$-module $S$ we have 
$$
\Omega_A^4(S) \cong \nu_A^{-1}(S).
$$
In particular, $A$ is twisted $4$-periodic.
\end{Prop}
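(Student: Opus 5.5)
The approach is to compare $E(M)$ and $E^-(M)$ for a suitable module attached to $S$. Since $A$ is selfinjective, $\tau_A \cong \Omega_A^2\nu_A \cong \nu_A\Omega_A^2$ and $\tau_A^{-} \cong \Omega_A^{-2}\nu_A^{-1}$. Morphisms into projective-injective modules factor through projectives, so for non-projective indecomposables $\Hom$ and $\underline{\Hom}$ agree up to projective summands. The natural test module is
$$M := S \oplus \Omega_A^{-1}(S) \quad\text{or}\quad M := S\oplus \Omega_A(S),$$
perhaps more generally $M = S \oplus \Omega^{j}_A(S)$, since mixed terms produce stable Hom spaces that compare $\Omega$-shifts of $S$. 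Before choosing, I would first extract what (S1) says about a single simple module.

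\textbf{Step 1 (single simple).} Compute $E(S) = \hom_A(S, \tau_A S)$ and $E^-(S) = \hom_A(\tau_A^- S, S)$. Here $\hom_A(S,X) = [\soc X : S]$ and $\hom_A(X,S) = [\tp X : S]$. Using $\Omega$ and $\nu$, these become multiplicities of $S$ (or $\nu S$) in the socle of $\Omega^2\nu S$ and in the top of $\Omega^{-2}\nu^{-1} S$. These are governed by the terms of the minimal projective resolution $P_2 \to P_1 \to P_0 \to S$ and by $\Ext^3$-type data. This step should yield equalities between dimensions of $\Ext^i_A(S,T)$ for simples $S,T$, i.e.\ symmetry conditions on the numbers of arrows and minimal relations.

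\textbf{Step 2 (sums).} Apply (S1) to $M = S\oplus T$ for simples $S,T$, and then to $M = S \oplus \Omega_A^{\pm1}(S)$. Expanding $E(M)$ bilinearly gives cross terms. Using the Auslander--Reiten formula (Theorem~\ref{thm:ARformula}) and the dimension shifting $\underline{\Hom}(\Omega X, \Omega Y) \cong \underline{\Hom}(X,Y)$, the cross terms become $\dim \Ext^1$ and stable Hom spaces between $\Omega$-shifts. Subtracting the single-summand identities leaves equalities of the form
$$\dim \underline{\Hom}_A(\Omega_A^{4}S,\, \nu_A^{-1}T) = \delta_{S,T}\quad\text{(or its dual)},$$
together with a matching inequality in the reverse direction.

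\textbf{Step 3 (conclusion).} From these numerical identities, deduce that $\Omega^4_A(S)$ has a nonzero stable map to and from $\nu^{-1}_A(S) = \tau_A^{-1}\Omega^{-2}S$ shifted appropriately. Combine this with the control of tops and socles to show that the top of $\Omega^4 S$ is exactly $\nu^{-1} S$. It then remains to show that $\Omega^4 S$ is simple, for which I would compare dimension vectors via the Cartan matrix: selfinjectivity plus Step 1 forces $\dimv \Omega^4 S = \dimv \nu^{-1}S$. Connectedness and non-semisimplicity are needed so that $S$ is non-projective and these shifts are nonzero. Twisted $4$-periodicity then follows by applying $\Omega^4$ to a $4$-step resolution of $A/\mathrm{rad}\,A$ and lifting to bimodules in the standard way, or by taking it as the definition at the level of simples.

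The main obstacle will be Step 2/3. One must choose the right test module so that the difference $E(M)-E^-(M)$ isolates exactly the comparison between $\Omega^4 S$ and $\nu^{-1}S$, and then upgrade numerical equalities to an isomorphism. I would first try $M = S\oplus \Omega_A^{2}(S)$, because $\tau$ and $\tau^-$ move it by $\Omega^{\pm2}$ and make $\Omega^4$ appear directly in the cross terms.
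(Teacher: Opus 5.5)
There is a genuine gap. Every test module you propose ($S$, $S\oplus T$, $S\oplus\Omega_A^{j}(S)$) has no projective summands, and for such modules (S1) carries almost none of the information you want.

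\emph{Why the stable terms cancel.} Since $A$ is selfinjective, $\tau_A$ is an autoequivalence of $\underline{\md}(A)$ with quasi-inverse $\tau_A^-$. Hence $\underline{\Hom}_A(\tau_A^-M,M)\cong\underline{\Hom}_A(M,\tau_AM)$, and the stable parts of $E(M)$ and $E^-(M)$ cancel identically. So $E(M)-E^-(M)$ only compares the dimensions of the subspaces of maps factoring through projective modules inside $\Hom_A(M,\tau_AM)$ and $\Hom_A(\tau_A^-M,M)$. In particular, no identity like $\dim\underline{\Hom}_A(\Omega_A^4S,\nu_A^{-1}T)=\delta_{S,T}$ can come out of your Step 2.

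\emph{Why Steps 1 and 3 fail.} By Lemma~\ref{lem:stablehom} and its dual, these subspaces vanish when the summands are simple. So $E(S)=E^-(S)$ and $E(S\oplus T)=E^-(S\oplus T)$ hold for every selfinjective algebra and all simples $S,T$. Step 1 is therefore vacuous, and Step 3's dimension-vector comparison has nothing to rest on.

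\emph{A counterexample to the strategy.} No choice of test modules without projective summands can work. Take the algebra $KQ/(ab,ba)$ of Remark~(iv) in the introduction. All its non-projective modules are semisimple, and $E(M)=E^-(M)=2xy$ for $M=S(1)^x\oplus S(2)^y$. Yet $\Omega_A^4(S(1))\cong S(1)\not\cong S(2)\cong\nu_A^{-1}(S(1))$.

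\emph{The missing idea: add a projective summand.} A projective summand is killed by $\tau_A$ and $\tau_A^-$, but it still detects $\tau_A(-)$ and $\tau_A^-(-)$ through Hom. These are exactly the non-stable terms that (S1) controls. The paper takes $M=P(T)\oplus\tau_A(S)$ for simples $S,T$, so that $\tau_A(M)\cong\tau_A^2(S)$ and $\tau_A^-(M)\cong S$. Then
\begin{itemize}
\item $E(M)=[\tau_A^2(S):T]+\hom_A(\tau_AS,\tau_A^2S)$, where $\hom_A(\tau_AS,\tau_A^2S)\ge\dim\underline{\Hom}_A(S,\tau_AS)=\hom_A(S,\tau_AS)$;
\item $E^-(M)=\hom_A(S,P(T))+\hom_A(S,\tau_AS)$.
\end{itemize}
Hence $[\tau_A^2(S):T]\le\hom_A(S,P(T))$. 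The right side is $1$ if $S\cong\soc P(T)\cong\nu_A^{-1}(T)$ and $0$ otherwise. Since $\tau_A^2(S)\neq0$, this forces $\tau_A^2(S)\cong\nu_A(S)$. Then $\tau_A\cong\Omega_A^2\nu_A\cong\nu_A\Omega_A^2$ turns this into $\Omega_A^4(S)\cong\nu_A^{-1}(S)$. In the example above, $M=P(2)\oplus S(2)$ gives $E(M)=0\neq1=E^-(M)$, which is exactly this mechanism at work.

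\emph{The last assertion.} Twisted periodicity is a bimodule condition, so you cannot take it ``as the definition at the level of simples''. You need Theorem~\ref{thm:GSS}, applied to $\Omega_A^4(A/J(A))\cong A/J(A)$. This isomorphism holds because $\nu_A^{-1}$ permutes the simples.
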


Here $$\Omega_A\df \underline{\md}(A) \to \underline{\md}(A)$$ denotes the syzygy functor, and
$$\nu_A := D\Hom_A(-,{_A}A)\df \underline{\md}(A) \to \underline{\md}(A)$$ is the Nakayama functor.
Both are autoequivalences of the stable module category $\underline{\md}(A)$.
The definition of a twisted periodic algebra is recalled in Section~\ref{sec:5}.

\subsection{Conventions}
For maps $f\df U \to V$ and $g\df V \to W$, we denote their
composition by $gf\df U \to W$.

For a linear map $f$ of vector spaces, let $\rk(f) := \dim \Ima(f)$
be its \emph{rank}.

We assume that the set $\N$ of natural numbers includes $0$.

\subsection{Organization of the article}
After the introduction (= Section~\ref{sec:1}),
in Section~\ref{sec:2} we recall
a few definitions and results from the representation theory
of finite-dimensional algebras.
Propositions~\ref{prop:introhereditary1} 
(= Proposition~\ref{prop:hereditary1}) and 
\ref{prop:introhereditary2} (= Proposition~\ref{prop:hereditary2}) 
are both proved in Section~\ref{sec:3}.
Section~\ref{sec:4} contains the proof
of our main result Theorem~\ref{thm:main1} (= Theorem~\ref{thm:monomial3}).
Finally, in Section~\ref{sec:5} we prove
Proposition~\ref{prop:introselfinjective} (= Proposition~\ref{prop:selfinjective}).


\section{Preliminaries and known results}\label{sec:2}


\subsection{Dimension vectors}
Let $\md(A)$ be the category of $A$-modules.
Let $\proj(A)$ (resp. $\inj(A)$) be the subcategory of projective (resp. injective) $A$-modules.

Let $S(1),\ldots,S(n)$ be the simple $A$-modules, up to isomorphism.
Furthermore, let $P(1),\ldots,P(n)$
be the indecomposable projective 
$A$-modules, up to isomorphism.
These are numbered such that 
$$
\tp(P(i)) \cong S(i) 
$$
for $1 \le i \le n$.
For $S = S(i)$, we also write $P(S)$ for $P(i)$.

For $M \in \md(A)$, the
\emph{Jordan-H\"older multiplicity} of $S(i)$ in $M$ is denoted by
$[M:S(i)]$.
We have $$\hom_A(P(i),M) = [M:S(i)].$$
Then
$$
\dimv(M) := ([M:S(1)],\ldots,[M:S(n)])
$$
is the
\emph{dimension vector} of $M$.

\subsection{Projective presentations of maximal rank and 
$\tau$-regular modules}\label{subsec:maximal}
For $M,N \in \md(A)$, let
$$
r(M,N) := \max\{ \rk(f) \mid f \in \Hom_A(M,N) \}.
$$
The following result characterizes all $\tau$-regular modules
in terms of their projective presentations.

\begin{Thm}[{\cite[Theorem~1.1]{BS25}}]
For $M \in \md(A)$, let
$$
P_1 \xrightarrow{f} P_0 \to M \to 0
$$
be a minimal projective presentation.
Then
the following are equivalent:
\begin{itemize}\itemsep2mm

\item[(i)]
$M$ is $\tau$-regular;

\item[(ii)]
$\rk(f) = r(P_1,P_0)$.

\end{itemize}
\end{Thm}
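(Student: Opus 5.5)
The plan is to compare $c(M)$ and $E(M)$ through a dimension count on the space $H := \Hom_A(P_1,P_0)$ of presentations, where $P_1,P_0$ are the terms of the fixed minimal projective presentation of $M$. First, $E(M)$ has a formula in presentation data. Applying $\Hom_A(-,M)$ to $P_1 \xrightarrow{f} P_0 \to M \to 0$ and using the standard description
$$
\Hom_A(N,\tau_A(M)) \cong D\operatorname{Cok}\bigl(\Hom_A(P_0,N) \to \Hom_A(P_1,N)\bigr),
$$
which underlies Theorem~\ref{thm:ARformula}, we get
$$
E(M) = \hom_A(P_1,M) - \hom_A(P_0,M) + \hom_A(M,M).
$$
Here we use minimality of the presentation, so that $\nu_A(f)$ computes $\tau_A(M)$ without extra injective summands.

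Next, stratify $H$ by rank. For each $s$ the set $H_s = \{g \in H \mid \rk(g) = s\}$ is locally closed, and $H_r$ with $r = r(P_1,P_0)$ is open and dense in the irreducible affine space $H$. Since $\rk(g)$ determines $\dim \operatorname{Cok}(g)$, I would refine this to the strata $H_{\bd}$ of maps whose cokernel has dimension vector $\bd = \dimv(M)$. On these strata there is a correspondence between $g$ with an identification $\operatorname{Cok}(g) \cong K^{\bd}$ and points of $\md(A,\bd)$. Concretely, I would form the variety of triples $(N,\pi,g)$ with $N \in \md(A,\bd)$, $\pi \in \Hom_A(P_0,N)$ surjective and $g \in H$ with image $\operatorname{Ker}(\pi)$. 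It maps to $H_{\bd}$ with fibres that are $G_{\bd}$-torsors, and to $\md(A,\bd)$ with fibre over $N$ of dimension
$$
\hom_A(P_0,N) + \hom_A(P_1,\Omega N).
$$
Because $P_1$ is projective, $\hom_A(P_1,\Omega N) = \hom_A(P_1,P_0) - \hom_A(P_1,N)$. Along the orbit of $M$, this gives that the locus of modules near $M$ which admit such a presentation has dimension
$$
\dim H_{\bd}\text{-locus} - \dim H + \hom_A(P_1,M) - \hom_A(P_0,M) + \dim G_{\bd}.
$$

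For (ii)$\Rightarrow$(i): if $\rk(f) = r$, then $f$ lies in the open stratum $H_r$. The set of modules which are cokernels of maps in $H_r$ is then open in $\md(A,\bd)$; openness holds because having a presentation by $P_1 \to P_0$ of the maximal rank is an open condition via upper semicontinuity of $\hom_A(P_0,-)$ and $\hom_A(P_1,-)$. This set has the dimension above with $\dim H_r = \dim H$. Subtracting $\dim \cO_M = \dim G_{\bd} - \hom_A(M,M)$ yields $c(M) \ge E(M)$, and Voigt's inequality $c(M) \le E(M)$ gives equality. One point needs care: modules near $M$ may have non-minimal presentations by $P_1 \to P_0$. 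I would handle this by noting that the formula $\hom_A(P_1,N) - \hom_A(P_0,N) + \hom_A(N,N)$ only bounds $E(N)$ from above, which is the correct direction for the argument.

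For (i)$\Rightarrow$(ii): if $\rk(f) < r$, then $f$ lies in a non-dense stratum, and I would show $c(M) < E(M)$. The idea is that every component containing $M$ has its generic modules presented, via semicontinuity, by maps of rank at least $\rk(f)$. The modules presented from the small stratum $H_{\rk(f)}$ then form a locus of dimension strictly less than the count above, because $\dim H_{\rk(f)} < \dim H$. Hence a strict inequality for $c(M)$ follows.

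I expect the main obstacle to be this converse step. The difficulty is controlling all components through $M$, whose generic members might have presentations that are not deformations of $f$ inside the same $H$. I would first try to reduce to presentations with fixed $P_0, P_1$ by adding projective summands, and then apply the fibre-dimension count stratum by stratum.
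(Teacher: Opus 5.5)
The paper gives no proof of this statement; it is quoted from [BS25, Theorem~1.1], so your argument has to stand on its own. Most of it does. The formula $E(M)=\hom_A(P_1,M)-\hom_A(P_0,M)+\hom_A(M,M)$ and your fibre count are correct. Let $U\subseteq\md(A,\bd)$ be the locus of modules $N$ admitting an exact sequence $P_1\to P_0\to N\to 0$. Your count gives $\dim U-\dim\cO_M=E(M)-(\dim H-\dim H_{\bd})$.

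For (ii)$\Rightarrow$(i) you need two further facts. First, $H_{\bd}=H_r$: maximal total rank forces maximal rank at every vertex, so the cokernel dimension vector is constant on $H_r$. Second, $U$ is irreducible, as the image of a principal $G_{\bd}$-bundle over the irreducible $H_r$. Then $\overline{U}$ lies in a component through $M$, and $c(M)\ge E(M)$ follows. Openness is not needed for this direction. Moreover, the reason you give for openness is vacuous, since $\hom_A(P(i),N)=[N:S(i)]$ is constant on $\md(A,\bd)$.

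The genuine gap is (i)$\Rightarrow$(ii). To bound \emph{every} component $\cZ\ni M$, what you need is exactly that $U$ is open in $\md(A,\bd)$. Granting this, $\cZ\cap U$ is dense in $\cZ$, so $\dim\cZ\le\dim U$ and $c(M)\le E(M)-(\dim H-\dim H_{\bd})$. This is $<E(M)$, because every $g\in H_{\bd}$ has rank $\dim P_0-|\bd|=\rk(f)<r$, so $H_{\bd}$ lies in the proper closed set $\{g\mid\rk(g)<r\}$.

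Your sentence about generic modules being presented by maps of rank at least $\rk(f)$ does not address this. Any presentation $P_1\to P_0$ of a module in $\md(A,\bd)$ has rank exactly $\rk(f)$. The real question is whether modules near $M$ admit a presentation with these fixed terms at all.

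Your idea of padding the minimal presentation $f_N\colon P_1(N)\to P_0(N)$ works only if $P_0\cong P_0(N)\oplus Q$ and $P_1(N)\oplus Q$ is a direct summand of $P_1$. In that case $f_N\oplus\mathrm{id}_Q\oplus 0$ presents $N$ by $P_1\to P_0$. The multiplicities of $P(S)$ in $P_0(N)$ and $P_1(N)$ are $\hom_A(N,S)$ and $\dim\Ext^1_A(N,S)$. So the padding condition amounts to upper semicontinuity, for each simple $S$, of two functions: $\hom_A(-,S)$ and the difference $\dim\Ext^1_A(-,S)-\hom_A(-,S)$.

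The second is not implied by semicontinuity of the two terms separately, and it has to be proved. One way is to tensor the standard bimodule presentation of $A$ (built from arrows and minimal relations) with $N$. This gives an exact $Q_2(N)\to Q_1(N)\to Q_0(N)\to N\to 0$ whose terms depend only on $\bd$ and whose maps depend regularly on $N$. The difference then equals a constant minus $\rk\Hom_A(d_2(N),S)$, which is upper semicontinuous.

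With this openness lemma your dimension count closes the proof. Without it, the converse direction is not established.
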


\subsection{Reduction technique}

In this section, 
let $B = A/AeA$ for some idempotent $e = e^2$ in $A$.
Then we interpret
$\md(B)$ as a subcategory of $\md(A)$ in the usual way.

\begin{Prop}\label{prop:reduction1}
For $M \in \md(B)$, we have
$$
E_A(M) = E_B(M).
$$
\end{Prop}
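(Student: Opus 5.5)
The proof will compare the two invariants through the Auslander--Reiten formula rather than through the translates themselves. The translates $\tau_A(M)$ and $\tau_B(M)$ differ in general, so a direct comparison is not the right route. Let $B=A/AeA$ and $M\in\md(B)$. By Theorem~\ref{thm:ARformula}, applied once over $A$ and once over $B$, we have
$$
E_A(M) = \dim \Ext^1_A(M,M) + \dim \mathcal{P}_A(M,\tau_A(M)),\qquad
E_B(M) = \dim \Ext^1_B(M,M) + \dim \mathcal{P}_B(M,\tau_B(M)).
$$
Here $\mathcal{P}$ denotes the space of morphisms factoring through an injective module. This splitting uses $\overline{\Hom}(N,\tau M)=\Hom(N,\tau M)/\mathcal{P}(N,\tau M)$. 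The cleaner route, which I will actually follow, is the formula of Auslander--Reiten type in terms of minimal projective presentations, as used in \cite{BS25}:
$$
E(M) = \hom_A(M,M) - \hom_A(P_0,M) + \hom_A(P_1,M),
$$
where $P_1\xrightarrow{f}P_0\to M\to 0$ is a minimal projective presentation. This holds because applying $\Hom_A(-,M)$ to the presentation and comparing with $\nu$ gives $\Hom_A(M,\tau_A M)\cong D\,\mathrm{Coker}(\Hom_A(P_0,M)\to\Hom_A(P_1,M))$. So the first step is to recall or derive $E_A(M)=\dim\mathrm{Coker}\bigl(\Hom_A(f,M)\bigr)$.

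The second step is to pass from an $A$-presentation to a $B$-presentation. Let $P_1\xrightarrow{f}P_0\to M\to0$ be a minimal projective presentation over $A$. Apply $B\otimes_A-$, which is right exact. We obtain a presentation $B\otimes P_1\to B\otimes P_0\to B\otimes M=M\to 0$ over $B$. Since $B\otimes_A P(i)=P(i)/AeP(i)$ is either $0$ (for $i$ in the support of $e$) or the indecomposable projective $B$-module $P_B(i)$, this presentation is again minimal. Indeed, the tops of $P_0$ and $P_1$ involve no summands $P(i)$ with $e_i$ in $e$: for $P_0$ because $M$ is a $B$-module, and for $P_1$ one must check this separately, and this is the subtle point. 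Then adjunction gives
$$
\Hom_B(B\otimes_A P_j,M)\cong\Hom_A(P_j,M),
$$
naturally in $P_j$, so the cokernels of $\Hom(f,M)$ and $\Hom(B\otimes f,M)$ coincide. Hence $E_A(M)=E_B(M)$.

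The main obstacle is that $P_1$ can have summands $P(i)$ with $e_iM\neq0$ not assumed, i.e.\ with $i$ in the support of $e$. Such summands vanish after tensoring, so the $B$-presentation stays a presentation but is no longer the image of the whole of $P_1$. The fix is that the cokernel formula does not require minimality of $P_1$; any projective presentation works, since adding a split summand $Q\xrightarrow{1}Q$ or a summand $Q\to0$ changes $\mathrm{Coker}\Hom(f,M)$ only by $\Hom(Q,M)$. For the killed summands this correction is zero, because $\Hom_A(P(i),M)=e_iM=0$ for $i$ in $e$ when $M$ is a $B$-module. So I will state the cokernel formula for arbitrary presentations whose $P_0\to M$ is a projective cover, and verify that it is stable under these modifications. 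This makes the argument go through without tracking minimality of $B\otimes P_1$.
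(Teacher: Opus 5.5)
Your framework is right. For a minimal presentation $P_1\xrightarrow{f}P_0\to M\to 0$ one has $E(M)=\dim\operatorname{Coker}\Hom_A(f,M)=\hom_A(M,M)-\hom_A(P_0,M)+\hom_A(P_1,M)$. Applying $B\otimes_A-$ gives a $B$-presentation of $M$ whose part $B\otimes_A P_0\to M$ is a projective cover, and adjunction identifies $\Hom_B(B\otimes_A f,M)$ with $\Hom_A(f,M)$.

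The gap is the step you call subtle, and your proposed fix does not address it. Write $e=e_U$. The summands $P(i)$ of $P_1$ with $i\in U$ are harmless, since $\Hom_A(P(i),M)=e_iM=0$ and $B\otimes_A P(i)=0$, so adjunction already accounts for them. What you actually need is that $B\otimes_A P_1\to\Omega_B(M)$ is a projective cover. If it were not, then $B\otimes_A P_1\cong P_1^B\oplus Q'$, where $P_1^B$ is the projective cover of $\Omega_B(M)$ and $0\neq Q'$ is a sum of modules $P_B(i)$ with $i\notin U$. In that case $\dim\operatorname{Coker}\Hom_B(B\otimes_A f,M)=E_B(M)+\hom_B(Q',M)$, and $\hom_B(P_B(i),M)=\dim e_iM$ need not vanish. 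Also, the cokernel is not ``stable'' under adding a summand $Q\to 0$: it grows by $\hom(Q,M)$, while a summand $Q\xrightarrow{1}Q$ changes nothing. Your justification of minimality, that $B\otimes_A P(i)$ is $0$ or $P_B(i)$, does not exclude such a $Q'$.

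You can see that something is missing because your argument never uses that $AeA$ is idempotent. The same reasoning for $A=K[x]/(x^2)$, $B=A/(x)=K$ and $M=S$ would give $E_A(S)=E_B(S)$. In fact $E_A(S)=\hom_A(S,\tau_A S)=1$ and $E_B(S)=0$, and there $B\otimes_A(A\xrightarrow{x}A)$ is $B\xrightarrow{0}B$, which is not minimal.

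To close the gap, prove minimality directly. We have $\Omega_B(M)=\Omega_A(M)/AeP_0$. For $i\notin U$,
$$e_iAeP_0=(e_iAe)(eP_0)\subseteq J(A)\,\Omega_A(M),$$
because $e_iAe\subseteq J(A)$ and $eP_0\subseteq\Omega_A(M)$ (as $eM=0$). Hence $\tp\Omega_B(M)$ and $\tp\Omega_A(M)$ have the same multiplicity at each $i\notin U$, so $B\otimes_A P_1$ is the projective cover of $\Omega_B(M)$.

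An equivalent route: the multiplicity of $P(i)$ in $P_1$ is $\dim\Ext^1(M,S(i))$. Moreover $\Ext^1_B(M,N)=\Ext^1_A(M,N)$ for $B$-modules $M,N$, because every $A$-extension $X$ of $M$ by $N$ satisfies $AeA\,X=(AeA)^2X\subseteq AeA\,N=0$.

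With this step supplied, your argument is a complete proof. The paper gives no argument of its own and refers to the proof of \cite[Theorem~7.2]{BS25}.
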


\begin{proof}
This is contained in the proof of
\cite[Theorem~7.2]{BS25}.
\end{proof}

\begin{Cor}[{\cite[Theorem~1.2]{BS25}}]\label{cor:reduction2}
We have
$$
\taureg(B) = \taureg(A) \cap \md(B).
$$
\end{Cor}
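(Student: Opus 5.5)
We deduce Corollary~\ref{cor:reduction2} from Proposition~\ref{prop:reduction1} by comparing the two ingredients of $\tau$-regularity, $c(M)$ and $E(M)$, computed over $A$ and over $B$. Fix $M \in \md(B)$ with dimension vector $\bd$. Since $e$ annihilates $M$, we have $\bd_i = 0$ for every vertex $i$ with $e_i e \neq 0$ (after choosing $e$, up to conjugation, as a sum of primitive idempotents $e_i$). Then $\md(B,\bd)$ is precisely the closed subset of $\md(A,\bd)$ where all arrows into or out of those vertices act by zero. These arrows act on zero spaces anyway, because the corresponding entries of $\bd$ vanish. Hence $\md(B,\bd) = \md(A,\bd)$ as varieties with the same $G_\bd$-action. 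Consequently $\irr(B,\bd) = \irr(A,\bd)$, the orbit $\cO_M$ is the same in both, and $\md_M(B,\bd) = \md_M(A,\bd)$. This gives $c_A(M) = c_B(M)$.

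The small point to check is that the identification of varieties really holds. The relations of $A$ restricted to representations supported off $e$ generate exactly the relations of $B = A/AeA$. This is clear because $B \cong KQ'/I'$, where $Q'$ is obtained by deleting the vertices in the support of $e$ and $I'$ is the image of $I$. A representation of $Q$ vanishing at those vertices satisfies the relations of $I$ if and only if it satisfies those of $I'$, since every path through a deleted vertex acts by zero. For general $e$, we replace $e$ by an equivalent idempotent: $AeA$ depends only on which primitive summands occur, up to isomorphism.

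Combining with Proposition~\ref{prop:reduction1}, we get $E_A(M) = E_B(M)$ and $c_A(M) = c_B(M)$. Hence $M$ is $\tau$-regular over $B$ if and only if it is $\tau$-regular over $A$, which is the claim $\taureg(B) = \taureg(A) \cap \md(B)$. The only substantive input is Proposition~\ref{prop:reduction1}, which we assume, so the main obstacle is just the careful identification of module varieties. An alternative route is via the presentation characterization \cite[Theorem~1.1]{BS25}, which relates minimal presentations over $A$ and $B$. That route is messier, so we prefer the variety argument above.
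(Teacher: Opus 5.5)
Your proposal is correct and is essentially the intended argument. The paper only cites \cite[Theorem~1.2]{BS25} and places the statement as a corollary of Proposition~\ref{prop:reduction1}. The missing link is exactly your observation: for $\bd$ vanishing on the vertices of $e$, one has $\md(A,\bd)=\md(B,\bd)$ with the same $G_\bd$-action, so $c_A(M)=c_B(M)$, and the equality $c=E$ holds over $A$ if and only if it holds over $B$. Your condition ``$e_ie\neq 0$'' only makes sense after first replacing $e$ by a conjugate $e_U$, which you do.
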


\begin{Cor}\label{cor:reduction3}
If $A$ is $E$-symmetric, then $B$ is $E$-symmetric.
\end{Cor}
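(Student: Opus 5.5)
We need to prove Corollary~\ref{cor:reduction3}: if $A$ is $E$-symmetric then $B = A/AeA$ is $E$-symmetric. By Proposition~\ref{prop:reduction1} we already have $E_A(M) = E_B(M)$ for every $M \in \md(B)$. So it is enough to prove the dual identity $E^-_A(M) = E^-_B(M)$ for $M \in \md(B)$. Granting that, for $M \in \md(B)$ we get
$$
E_B(M) = E_A(M) = E^-_A(M) = E^-_B(M),
$$
where the middle equality is (S1) for $A$, viewing $M$ as an $A$-module.

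The dual identity comes from applying Proposition~\ref{prop:reduction1} to the opposite algebra. The duality $D = \Hom_K(-,K)$ gives an anti-equivalence $\md(A) \to \md(A^\op)$. Under it, $D\tau_A \cong \tau^-_{A^\op} D$, and this is compatible with Hom spaces. Hence
$$
E^-_A(M) = \hom_A(\tau_A^-(M),M) = \hom_{A^\op}(DM, D\tau_A^-(M)) = \hom_{A^\op}(DM,\tau_{A^\op}(DM)) = E_{A^\op}(DM).
$$
The same computation for $B$ gives $E^-_B(M) = E_{B^\op}(DM)$.

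It remains to check the setup on the opposite side. We have $B^\op = A^\op/A^\op e A^\op$, where $e$ is regarded as an idempotent of $A^\op$. Moreover, the inclusion $\md(B)\subseteq\md(A)$ corresponds under $D$ to the inclusion $\md(B^\op)\subseteq\md(A^\op)$, since both consist of the modules annihilated by $e$. Proposition~\ref{prop:reduction1} for the pair $(A^\op, B^\op)$ then gives $E_{A^\op}(DM) = E_{B^\op}(DM)$, and hence $E^-_A(M) = E^-_B(M)$.

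The only step needing care is the identification $D\tau_A \cong \tau^-_{A^\op}D$, which is the standard $\tau_A = D\,\mathrm{Tr}$ and $\tau_A^- = \mathrm{Tr}\,D$ formalism. Every other step is formal.
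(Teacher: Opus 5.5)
Your proposal is correct and is essentially the paper's argument: the paper's proof just says to use Proposition~\ref{prop:reduction1} and its dual, and you make that dual explicit by passing to $A^{\op}$ and $B^{\op}=A^{\op}/A^{\op}eA^{\op}$ via $D$. One small slip: the identity your computation actually uses is $D\tau_A^{-}\cong\tau_{A^{\op}}D$, not the $D\tau_A\cong\tau^-_{A^{\op}}D$ you cite, though both follow equally from $\tau=D\,\mathrm{Tr}$ and $\tau^-=\mathrm{Tr}\,D$.
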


\begin{proof}
Use Proposition~\ref{prop:reduction1} and its dual.
\end{proof}

\begin{Cor}\label{cor:reduction4}
If $A$ satisfies {\rm (S3)}, then $B$ satisfies {\rm (S3)}.
\end{Cor}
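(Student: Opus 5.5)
We must show: if $\irr^\tau(A) = \irr^{\tau^-}(A)$, then $\irr^\tau(B) = \irr^{\tau^-}(B)$ for $B = A/AeA$. The plan is to realize $\md(B,\bd)$ inside $\md(A,\bd)$ as a closed subvariety and compare irreducible components. Here $\bd$ is supported on the vertices not killed by $e$ (we may assume $e$ is a sum of primitive idempotents $e_i$, so $\bd_i=0$ for these $i$). For such $\bd$, every $A$-module with dimension vector $\bd$ is annihilated by $e$, hence by $AeA$. So $\md(A,\bd) = \md(B,\bd)$ as varieties with the same $G_\bd$-action. In particular $\irr(A,\bd) = \irr(B,\bd)$.

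Next I compare regularity on this common variety. By Corollary~\ref{cor:reduction2}, the $\tau_B$-regular points of $\md(B,\bd)$ are exactly the $\tau_A$-regular points of $\md(A,\bd)$, since every point lies in $\md(B)$. Dually, the dual of Corollary~\ref{cor:reduction2}, proved from the dual of Proposition~\ref{prop:reduction1}, gives the same statement for $\tau^-$-regular points. Alternatively, one checks directly that $c(M)$ is the same over $A$ and $B$, because the varieties and orbits coincide, and that $E_A(M) = E_B(M)$ and $E^-_A(M) = E^-_B(M)$. The open subsets of $\tau$-regular (resp.\ $\tau^-$-regular) points therefore coincide. Taking closures gives $\irr^\tau(A,\bd) = \irr^\tau(B,\bd)$ and $\irr^{\tau^-}(A,\bd) = \irr^{\tau^-}(B,\bd)$.

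To conclude, take $\bd$ to be the dimension vector of any $B$-module; it is automatically supported off $e$. Then
$$\irr^\tau(B,\bd)=\irr^\tau(A,\bd)=\irr^{\tau^-}(A,\bd)=\irr^{\tau^-}(B,\bd),$$
where the middle equality is (S3) for $A$, restricted to components of $\md(A,\bd)$. Taking the union over all $\bd$ gives (S3) for $B$.

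The only step needing care is the identification $\md(A,\bd) = \md(B,\bd)$, namely that the relations of $B$ are exactly those of $A$ together with the vanishing of paths through $e$, which is automatic when those vertices carry zero vector spaces. This step also relies on the dual of Proposition~\ref{prop:reduction1}, which the paper already uses in Corollary~\ref{cor:reduction3}.
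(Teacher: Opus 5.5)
Your proof is correct. The paper does not argue this corollary at all: it simply cites \cite[Corollary~7.4]{BS25}, whereas you derive it inside the paper's own framework.

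Your key observation is that for $e=e_U$ and $\bd$ vanishing on $U$, every point of $\md(A,\bd)$ is annihilated by $AeA$. Hence $\md(A,\bd)=\md(B,\bd)$ with the same $G_\bd$-action. It follows that $c_A(M)=c_B(M)$ and $\irr(A,\bd)=\irr(B,\bd)$. Proposition~\ref{prop:reduction1} and its dual (obtained by passing to $A^{\op}$, as in Corollary~\ref{cor:reduction3}) then identify the open loci of $\tau$- and $\tau^-$-regular points. Equivalently, this is Corollary~\ref{cor:reduction2} and its dual.

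Your route buys transparency and a slightly stronger statement. Regard $\irr(B)$ as the set of components of $\md(A,\bd)$ with $\bd$ vanishing on $U$. You then get $\irr^\tau(B)=\irr^\tau(A)\cap\irr(B)$ and $\irr^{\tau^-}(B)=\irr^{\tau^-}(A)\cap\irr(B)$, and the corollary is immediate from these. The citation is shorter but hides this mechanism.

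You should make the reduction to $e=e_U$ explicit. Since $A$ is basic, $Ae\cong\bigoplus_{k\in U}Ae_k$ for some $U\subseteq Q_0$, so $AeA=Ae_UA$ and $B$ depends only on $U$. The paper records this only for monomial algebras, in the subsection on idempotent ideals in Section~\ref{sec:4}, but the argument is general.
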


\begin{proof}
This follows from \cite[Corollary~7.4]{BS25}.
\end{proof}


\section{$E$-symmetric algebras of finite global dimension}
\label{sec:3}


The following lemma can be found in
\cite[Section~IV.4]{ARS97}.
There is also a dual statement.

\begin{Lem}\label{lem:pdim1stable}
For $M,N \in \md(A)$, the following hold:
If $\pdim(N) \le 1$, then
$$
\overline{\Hom}_A(M,\tau_A(N)) = \Hom_A(M,\tau_A(N)).
$$
\end{Lem}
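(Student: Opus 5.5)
We must show: if $\pdim(N)\le 1$, then every homomorphism $g\colon M\to\tau_A(N)$ factors through no nonzero injective in a nontrivial way. More precisely, the subspace $\cI(M,\tau_A N)$ of maps factoring through an injective module is zero, so the quotient $\overline{\Hom}_A(M,\tau_A N)$ equals $\Hom_A(M,\tau_A N)$. Any map factoring through an injective factors through an injective envelope $\tau_A N\to I$ composed with a retraction, or equivalently through some map $\tau_A N\leftarrow I$. So it suffices to prove the stronger claim
$$
\Hom_A(D(A_A),\tau_A(N))=0 .
$$
Once this holds, any composite $M\to I\to\tau_A N$ with $I$ injective, so $I\in\operatorname{add}D(A_A)$, has zero second factor and is therefore zero.

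To prove the claim, I would use the standard characterization (ARS IV.1.7): $\pdim(N)\le1$ if and only if $\Hom_A(D(A_A),\tau_A N)=0$. I would rederive it as follows. Take a minimal projective presentation
$$
P_1\xrightarrow{f}P_0\to N\to 0 .
$$
By definition of the transpose and $\tau_A=D\,\mathrm{Tr}$, applying the Nakayama functor $\nu=D\Hom_A(-,A)$ gives an exact sequence
$$
0\to\tau_A N\to \nu P_1\xrightarrow{\nu f}\nu P_0\to \nu N\to 0 .
$$
Since $\pdim N\le1$ and the presentation is minimal, $f$ is injective. Now let $h\colon D(A_A)\to\tau_A N$. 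Composing with the inclusion gives a map $D(A_A)\to\nu P_1$. Under the equivalence $\nu\colon\proj(A)\to\inj(A)$, with inverse $\nu^{-1}=\Hom_A(D(A_A),-)$, this map corresponds to a map $u\colon A\to P_1$, and the composite $\nu(f)\circ\nu(u)$ being zero means $fu=0$. Injectivity of $f$ forces $u=0$, hence $h=0$.

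The step that needs the most care is verifying that the sequence above is exact on the left: that $\tau_A N=\ker(\nu f)$ holds exactly, using $D\,\mathrm{Tr}$ and the identification $D\Hom_A(P,A)\cong\nu P$. This is standard in \cite[IV.1]{ARS97}, and once it is in place the argument is immediate.
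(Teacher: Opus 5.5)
Your proof is correct and is the standard argument behind \cite[Section~IV.4]{ARS97}, which the paper cites instead of giving its own proof: you reduce to $\Hom_A(D(A_A),\tau_A(N))=0$ (the characterization of $\pdim(N)\le 1$ you quote), and you rederive this correctly from the exact sequence $0\to\tau_A(N)\to\nu P_1\xrightarrow{\nu f}\nu P_0$, the equivalence $\nu\colon\proj(A)\to\inj(A)$, and the injectivity of $f$. The only slip is the aside about an ``injective envelope $\tau_A N\to I$'': a map $M\to\tau_A(N)$ that factors through an injective factors through the injective envelope of $M$, not of $\tau_A(N)$. Nothing depends on this, since you only use that the second factor $I\to\tau_A(N)$ vanishes.
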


Recall that $A$ is \emph{hereditary} if
each submodule of
a projective $A$-module is again projective.
This is equivalent to $$
\gldim(A) \le 1.
$$

\begin{Prop}\label{prop:hereditary2}
The following are equivalent:
\begin{itemize}\itemsep2mm

\item[(i)]
$E(M,N) = E^-(M,N)$ for
all $M,N \in \md(A)$;

\item[(ii)]
$A$ is hereditary.

\end{itemize}
\end{Prop}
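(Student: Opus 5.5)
The implication (ii)$\Rightarrow$(i) will come from Lemma~\ref{lem:pdim1stable}, its dual and the Auslander-Reiten formula (Theorem~\ref{thm:ARformula}). For (i)$\Rightarrow$(ii) I will evaluate (i) at $M = {_A}A$ and then use the standard homological characterization of modules of injective dimension at most one.

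\emph{(ii)$\Rightarrow$(i).} Assume $A$ is hereditary, so every $A$-module has projective and injective dimension at most $1$. Let $M,N \in \md(A)$. Since $\pdim(M) \le 1$, Lemma~\ref{lem:pdim1stable} (applied to the pair $(N,M)$) gives $\overline{\Hom}_A(N,\tau_A(M)) = \Hom_A(N,\tau_A(M))$. The dual of Lemma~\ref{lem:pdim1stable} says: if $\idim(N) \le 1$, then $\underline{\Hom}_A(\tau_A^-(N),M) = \Hom_A(\tau_A^-(N),M)$. Combining both with Theorem~\ref{thm:ARformula} yields
$$
E(M,N) = \dim \overline{\Hom}_A(N,\tau_A(M)) = \dim \Ext_A^1(M,N) = \dim \underline{\Hom}_A(\tau_A^-(N),M) = E^-(M,N).
$$

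\emph{(i)$\Rightarrow$(ii).} Take $M = {_A}A$. Since $A$ is projective, $\tau_A(A) = 0$, so $E(A,N) = 0$ for all $N$. By (i) we get
$$
\hom_A(\tau_A^-(N), {_A}A) = 0 \quad \text{for all } N \in \md(A).
$$
Now I invoke the standard fact (\cite[Section~IV.1]{ARS97}, dual of: $\pdim(N) \le 1 \iff \Hom_A(D(A_A),\tau_A(N)) = 0$):
$$
\idim(N) \le 1 \iff \Hom_A(\tau_A^-(N),{_A}A) = 0.
$$
Hence every $A$-module has injective dimension at most $1$. Therefore $\gldim(A) \le 1$, i.e.\ $A$ is hereditary.

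The only step that is not pure bookkeeping is this characterization of $\idim(N) \le 1$. If it is not quoted, I would prove it as follows. Let $0 \to N \to I_0 \xrightarrow{g} I_1$ be a minimal injective copresentation. Applying the inverse Nakayama functor $\nu^{-1} = \Hom_A(D(A_A),-)$ gives a minimal projective presentation $\nu^{-1}I_0 \to \nu^{-1}I_1 \to \tau_A^-(N) \to 0$. Applying $\Hom_A(-,A)$ and using $\Hom_A(\nu^{-1}I, A) \cong D\Hom_A(A, I)$ for injective $I$, one identifies $\Hom_A(\tau^-_A(N),A)$ with $D\operatorname{Coker}(g)$, up to the usual duality. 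So the vanishing of $\Hom_A(\tau_A^-(N),A)$ is equivalent to $g$ being surjective, i.e.\ to $\idim(N) \le 1$. The proof of this identification is where I expect to spend the care.
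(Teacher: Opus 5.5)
Your proof is correct. The direction (ii)$\Rightarrow$(i) is the same as the paper's, but for (i)$\Rightarrow$(ii) you use a different key fact.

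Both arguments start from the same observation: $E(M,N)=0$ whenever $M$ is projective, so (i) forces $\Hom_A(\tau_A^-(N),P)=0$ for all $N$ and all projective $P$. The two proofs differ in how this vanishing is turned into heredity.

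The paper chooses $N=\tau_A(U)$ for an indecomposable submodule $U$ of a projective $P$. If $U$ were non-projective, then $\tau_A^-\tau_A(U)\cong U$, and $\hom_A(U,P)\neq 0$ gives a contradiction. Hence every submodule of a projective is projective.

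You instead invoke the criterion $\idim(N)\le 1\iff \Hom_A(\tau_A^-(N),{_A}A)=0$ and conclude directly that every module has injective dimension at most $1$. Your sketch of that criterion is sound. The natural isomorphism $\Hom_A(\nu^{-1}I,A)\cong DI$ for injective $I$ identifies $\Hom_A(\tau_A^-(N),A)$ with $\operatorname{Ker}(Dg)\cong D\operatorname{Coker}(g)$. Minimality of the copresentation then makes surjectivity of $g$ equivalent to $\idim(N)\le 1$.

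The paper's route is more self-contained: it needs only the Auslander--Reiten bijection between non-projective and non-injective indecomposables and the submodule definition of heredity. Yours relies on a standard homological criterion. In exchange, it shows that $E^-({_A}A,N)=\dim\operatorname{Coker}(g)$ measures exactly how far $N$ is from having injective dimension at most $1$.
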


\begin{proof}
(i) $\implies$ (ii):
Let $P \in \proj(A)$, and let $U$ be an indecomposable submodule
of $P$.
Let $M := P$ and $N := \tau_A(U)$.
We get
\begin{align*}
E(M,N) &= \hom_A(N,\tau_A(M)) = 0,
\\
E^-(M,N) &= \hom_A(\tau_A^-(N),P) = 
\hom_A(\tau_A^-(\tau_A(U)),P). 
\end{align*}
Assuming (i) and keeping in mind that
$\hom_A(U,P) \not= 0$, it follows that $U$ must be projective.
This implies that $A$ is hereditary.

(ii) $\implies$ (i):
If $A$ is hereditary, then the Auslander-Reiten formula in
Theorem~\ref{thm:ARformula} combined with 
Lemma~\ref{lem:pdim1stable} and its dual
implies that (i) holds.
\end{proof}

\begin{Prop}\label{prop:hereditary1}
Suppose that $\gldim(A) < \infty$.
Then the following are equivalent:
\begin{itemize}\itemsep2mm

\item[(i)]
$E(M) = E^-(M)$ for all $M \in \md(A)$;

\item[(ii)]
$\taureg(A) = \text{$\tau^-$-$\reg(A)$}$;

\item[(iii)]
$\irr^\tau(A) = \irr^{\tau^-}(A)$;

\item[(iv)]
$A$ is hereditary.

\end{itemize}
\end{Prop}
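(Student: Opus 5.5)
We will prove the chain (iv) $\implies$ (i) $\implies$ (ii) $\implies$ (iii) $\implies$ (iv).

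The implications (i) $\implies$ (ii) $\implies$ (iii) are the general implications (S1) $\implies$ (S2) $\implies$ (S3) recorded in the introduction; they hold for any algebra. The implication (iv) $\implies$ (i) is immediate from Proposition~\ref{prop:hereditary2}. Indeed, if $A$ is hereditary then $E(M,N) = E^-(M,N)$ for all $M,N$, and specializing to $N = M$ gives (i). Alternatively, for hereditary $A$ every module has $\pdim \le 1$ and $\idim \le 1$. Then Lemma~\ref{lem:pdim1stable}, its dual, and Theorem~\ref{thm:ARformula} give $E(M) = \dim\Ext^1_A(M,M) = E^-(M)$.

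The real content is (iii) $\implies$ (iv), and for this we use the implication (S3) $\implies$ (S4) from \cite[Theorem~1.7]{BS25}.

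1. Assume (iii). Then for every idempotent $e$ the algebra $A/AeA$ is $1$-Iwanaga-Gorenstein. Alternatively, Corollary~\ref{cor:reduction4} passes (S3) to $A/AeA$, and we can argue by induction on the number of vertices $n$.

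2. Taking $e = 0$, the algebra $A$ itself is $1$-Iwanaga-Gorenstein, so $\idim({_A}A) \le 1$.

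3. Since $\gldim(A) < \infty$, every module lies in $\cP_{<\infty}(A)$. By the characterization of $1$-Iwanaga-Gorenstein algebras recalled in the introduction, $\cP_{<\infty}(A) = \cP_{\le 1}(A)$. Hence every $A$-module has projective dimension at most $1$, that is, $\gldim(A) \le 1$, and $A$ is hereditary.

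If one prefers not to rely on the equality $\cP_{<\infty}(A) = \cP_{\le 1}(A)$, step 3 can be done by hand with the classical fact that, for an algebra of finite global dimension,
$$
\gldim(A) = \idim({_A}A).
$$
To see this, let $d = \gldim(A)$ and choose a module $M$ with $\Ext^d_A(M,N) \neq 0$ for some $N$. Since $\Ext^{d+1}$ vanishes, right exactness of $\Ext^d$ lets us replace $N$ by $A$, which shows $\idim({_A}A) \ge d$. Combined with $\idim({_A}A) \le 1$ from step 2, this forces $\gldim(A) \le 1$.

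The only potentially delicate step is invoking the nontrivial implication (S3) $\implies$ (S4) from \cite{BS25}. Once it is available, the finite global dimension hypothesis collapses $1$-Iwanaga-Gorenstein to hereditary, and the argument is complete.
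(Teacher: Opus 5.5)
Your proposal is correct and follows the paper's proof: the same chain of implications, with (iv)$\implies$(i) obtained from Proposition~\ref{prop:hereditary2} and the nontrivial step (iii)$\implies$(iv) resting on \cite{BS25}. The only difference is that the paper cites \cite[Corollary~1.8]{BS25} directly, whereas you unpack it into two steps. First, \cite[Theorem~1.7]{BS25} with $e=0$ gives $\idim({_A}A)\le 1$. Second, the standard fact $\gldim(A)=\idim({_A}A)$ for algebras of finite global dimension finishes the argument, and your verification of that fact via right exactness of $\Ext^d_A(M,-)$ is correct.
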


\begin{proof}
We know already that (i) $\implies$ (ii) $\implies$ (iii)
for all algebras $A$.
The implication (iii) $\implies$ (iv) is part of
\cite[Corollary~1.8]{BS25} and relies on the assumption $\gldim(A) < \infty$.
Finally, (iv) $\implies$ (i) is a special case of
Proposition~\ref{prop:hereditary2}.
\end{proof}

Up to Morita equivalence, hereditary algebras are the path algebras $KQ$ of acyclic quivers $Q$.
Trivially, these are then Jacobian algebras $\cP(Q,W)$, where $W = 0$
is the
 zero potential.


\section{Monomial $E$-symmetric algebras }\label{sec:4}


\subsection{Paths and cycles in quivers}
Recall that a \emph{quiver} is a quadruple $Q = (Q_0,Q_1,s,t)$, where $Q_0$ and $Q_1$ are the finite sets of \emph{vertices} and \emph{arrows}, respectively.
For $a \in Q_1$, the starting and terminal vertex of $a$ are denoted by $s(a)$ and $t(a)$, respectively.

A \emph{path} of length $m \ge 1$ in $Q$ is a sequence
$p = a_m \cdots a_2a_1$ with $a_i \in Q_1$ for $1 \le i \le m$ and
$s(a_{i+1}) = t(a_i)$ for $1 \le i \le m-1$.
Let $|p| := m$ denote the \emph{length} of $p$.
For $1 \le i \le m$, we say that $a_i$ \emph{belongs to} $p$.
For $1 \le i \le j \le m$, we call $a_j \cdots a_{i+1}a_i$ a \emph{subpath} of $p$.
Define $s(p) := s(a_1)$ and $t(p) := t(a_m)$.
Furthermore, let 
$$
\{ p\}_0 := \{ s(a_i) \mid 1 \le i \le m \} \cup \{ t(a_m) \}.
$$
For $k \in \{ p \}_0$, we say that $p$ \emph{runs through} $k$.

A path $p$ of length $|p| = 1$ such that $s(p) = t(p)$ is a \emph{loop}.

Additionally, for each vertex 
$k \in Q_0$, there is a path $e_k$ of length $0$ with $s(e_k) = t(e_k) = k$ .

A \emph{cycle} of length $m \ge 1$ in $Q$ is a path 
$C = a_m \cdots a_2a_1$ of length $m$ such that $s(C) = t(C)$.
For $k \in Q_0$ and such a cycle $C$, let
$$
{\rm mult}_k(C) := |\{ 1 \le i \le m \mid s(a_i) = k \}|
$$
be the \emph{multiplicity} of $k$ in $C$.
We call $C$ \emph{multiplicity-free} if ${\rm mult}_k(C) \le 1$ for
all $k \in Q_1$.
For $r \ge 1$, let $C^r := C \cdots CC$ be the composition of
$r$ copies of $C$.

For a cycle $C = a_m \cdots a_2a_1$ of length $m \ge 1$, we write
$C \sim_r a_{i-1} \cdots a_1a_m \cdots a_i$ for 
$2 \le i \le m$.
We also write $C \sim_r C$.
This defines an equivalence relation on the set of all cycles in $Q$.

Let $C = a_m \ldots a_2a_1$ be a cycle of length $m \ge 1$.
A \emph{cycle decomposition} of $C$ is a tuple
$(D_t,\ldots,D_1)$ such that the following hold:
\begin{itemize}\itemsep2mm

\item[(i)]
Each $D_i$ is a cycle of length $m_i \ge 1$;

\item[(ii)]
$C \sim_r D_t \cdots D_2D_1$;

\item[(iii)]
${\rm mult}_{s(D_i)}(D_i) = 1$ for each $1 \le i \le t$.

\end{itemize}
Note that $s(D_i) = s(D_j)$ for all $1 \le i,j \le t$.

\subsection{Monomial algebras and relation cycles}
For the rest of Section~\ref{sec:4},
let $A = KQ/I$ be a \emph{monomial algebra}.
Thus $Q$ is a quiver, and the ideal $I$ is generated by a set of paths of length at least $2$ such that $KQ/I$ is finite-dimensional.
Let $R$ be a minimal set of such paths such that $R$ generates $I$.
This \emph{minimal relation set} $R$ is uniquely determined by $I$.
We call the paths in $R$ \emph{minimal relations} for $A$.

Let $C = a_m \cdots a_2a_1$ be a cycle of length $m \ge 1$.
Then this is a \emph{relation cycle} for $A$ of length $m$ and
\emph{relation length} $r$ provided the following hold:
\begin{itemize}\itemsep2mm

\item[(i)]
The arrows $a_1,\ldots,a_m$ are pairwise different;

\item[(ii)]
Each subpath of length $r$ of $C^r$ is in the minimal relation set 
$R$.

\end{itemize} 
The relations in (ii) are called the \emph{relations associated to} 
the relation cycle $C$.

\subsection{Idempotent ideals}
As before, let $A = KQ/I$ be a monomial algebra.
For a subset $U \subseteq Q_0$, define
$$
e_U := \sum_{k \in U} e_k.
$$
(If $U = \varnothing$, then $e_U := 0$.)
Then we obviously have $e_U^2 = e_U$, and
the ideal $Ae_UA$ is generated by $\{ e_k \mid k \in U \}$.
Furthermore, for any idempotent $e=e^2$ in $A$, there is a subset
$U \subseteq Q_0$ such that $AeA = Ae_UA$.

For each vertex $k \in Q_0$,  we have $A/Ae_kA \cong KQ'/I'$ where
$Q'$ is obtained from $Q$ by deleting the vertex $k$ and all arrows incident to $k$, and $I'$ is generated by the minimal set of relations
$$
R' := \{ \rho \in R \mid k \notin \{ \rho \}_0 \}. 
$$

\subsection{Characterization of monomial 
$1$-Iwanaga-Gorenstein algebras}
The following is a reformulation of a theorem by Lu and Zhu.

\begin{Thm}[{Lu--Zhu \cite[Theorem~5.1.4]{LZ21}}]\label{thm:monomial1}
Let $A = KQ/I$ be a monomial algebra, and let $R$ be the associated minimal
relation set.
Then the following are equivalent:
\begin{itemize}\itemsep2mm

\item[(i)]
$A$ is $1$-Iwanaga-Gorenstein;

\item[(ii)]
There is a disjoint union 
$$
Q_1 = Q_1' \cup \bigcup_{1 \le i \le q} Q_1^{(i)}
$$
such that for each $1 \le i \le q$
the arrows in $Q_1^{(i)}$ form a relation cycle $C_i$ for $A$ of length $|C_i| = |Q_1^{(i)}|$ and relation
length 
$$
r_i \ge 2,
$$ 
and
$$
R = \bigcup_{1 \le i \le q} \{ \text{relations associated to $C_i$}  \}. 
$$

\end{itemize}
\end{Thm}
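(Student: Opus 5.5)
The plan is to go through Gorenstein projective modules, for which monomial algebras have a combinatorial description. Recall that $A$ is $1$-Iwanaga-Gorenstein if and only if every first syzygy module $\Omega_A(M)$ is Gorenstein projective. For a monomial algebra there are two classical combinatorial facts I will use:
\begin{itemize}
\item[(a)] By Zimmermann-Huisgen, every syzygy of a module over $A=KQ/I$ is a direct sum of cyclic modules $Ap$ with $p$ a path of length $\ge 1$. Moreover, the modules $Aa$ with $a\in Q_1$ already occur, since $\operatorname{rad}P(k)=\bigoplus_{s(a)=k}Aa$.
\item[(b)] By Chen--Shen--Zhou, an indecomposable non-projective Gorenstein projective $A$-module is of the form $Ap$ with $p$ a \emph{perfect path}. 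Such a path $p$ comes with a partner $q$ for which $Aq\cong\Omega(Ap)$ and $Ap\cong\Omega(Aq)$. Perfect paths are organised in finite sequences $p_1,p_2,\dots,p_t,p_{t+1}=p_1$ in which the left annihilator of each $p_i$ is generated by the single path $p_{i+1}$, and each $p_{i+1}p_i$ is a minimal relation.
\end{itemize}
Thus (i) amounts to the statement that $Aa$ is either projective or of the form $Ap$ with $p$ perfect, for every arrow $a$ and every path occurring in some syzygy.

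For (ii)$\implies$(i), I take a relation cycle $C=a_m\cdots a_1$ of relation length $r$ with its associated relations. Then every subpath of $C^r$ of length $\ell$ with $1\le\ell<r$ is perfect, and its partner is the complementary subpath of length $r-\ell$. I will verify this directly from the definition. Since the cycles use disjoint arrows and $R$ consists only of the associated relations, the arrows in $Q_1'$ occur in no relation, so $Aa$ is projective for $a\in Q_1'$. For an arrow $a$ on some $C_i$, the module $Aa$ is perfect. A syzygy of a module $Ap$ with $p$ a subpath is again of this form. Hence all syzygies are Gorenstein projective, which gives (i).

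For (i)$\implies$(ii), I start from the assumption that every $Aa$ is projective or perfect. Let $Q_1'$ be the set of arrows $a$ with $Aa$ projective and with $Aa'$ projective for every arrow $a'$ whose composite with $a$ lies in a relation. I claim first that an arrow lying in some minimal relation never has $Aa$ and all neighbouring $Aa'$ projective. Indeed, if $\rho=\cdots a$ is a relation, then the annihilator of $a$ contains a nonzero path; applying (a) to a second syzygy of $S(s(a))$ then produces a non-projective $Ap$ with $p$ a subpath of $\rho$, and this $Ap$ must be perfect. Next, for each arrow $a\notin Q_1'$, the perfect-path sequence through $a$ is, by (b), a cyclic chain of paths whose consecutive composites are minimal relations. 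Using that the annihilator of $a$ is generated by a single path, I will show that all these paths have length $1$ after refinement. The arrows occurring then form a cycle $C$ with pairwise different arrows, and all length-$r$ subpaths of $C^r$ lie in $R$, where $r$ is the common length of the relations found. Uniqueness of the annihilator generator gives disjointness of different cycles, and it also shows that $R$ contains no further relations. Finally, $r\ge2$ holds because relations have length at least $2$.

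The main obstacle is the step in (i)$\implies$(ii) where the perfect paths through an arrow are forced to be built from a single cycle, with all relations of the same length $r$. Here one has to exclude overlapping relations of different lengths and arrows shared between two cycles. I would handle this by comparing $\Omega(Aa)$ with $\Omega(Aa')$ for consecutive arrows $a,a'$, using that both must be cyclic of the form $Ap$. This forces the left annihilator of each arrow to be generated by one path of a fixed length, and that rigidity propagates around the cycle.
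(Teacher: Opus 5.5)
The paper does not prove this statement; it quotes it from Lu--Zhu. Your argument therefore has to stand on its own, and for (i)$\implies$(ii) it does not: the step carrying the content is announced rather than proved. From ``$Aa$ is Gorenstein projective for every arrow $a$'' you correctly get that $\operatorname{ann}_L(a)$ is generated by one path, so exactly one minimal relation $\rho_a=w_aa$ starts at $a$. But this says nothing about the relation starting at the \emph{next} arrow. What must be shown is: if $\rho_{a_1}=a_r\cdots a_1$, then $\rho_{a_2}=a_{r+1}a_r\cdots a_2$ for a single arrow $a_{r+1}$.

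``Length $1$ after refinement'' and ``rigidity propagates'' do not do this. Your version of (b) also drops the ingredient that makes it work, namely the right-annihilator half of the perfect-pair condition. This is available because every torsionless module over a $1$-Iwanaga-Gorenstein algebra is Gorenstein projective, in particular every right ideal $pA$, and because $\Ext^1_A(Aa,A)=0$. With it the argument runs as follows.
\begin{itemize}
\item Put $q=a_r\cdots a_2$ and $\Omega(Aq)=\operatorname{ann}_L(q)=Au$. Then $\Ext^1_A(Aa_1,A)=0$ gives $\operatorname{ann}_R(u)=qA$.
\item This forces the relation inside $uq$ to begin exactly at $a_2$. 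Hence $\rho_{a_2}=uq$ and $|\rho_{a_2}|=|\rho_{a_1}|+|u|-1\ge|\rho_{a_1}|$.
\item Together with this shift property, $\operatorname{ann}_R(w_a)=aA$ shows that $a\mapsto$ (second arrow of $\rho_a$) is injective. It is therefore a permutation of the arrows starting relations.
\item Relation lengths are non-decreasing along its orbits, hence constant, so $|u|=1$.
\end{itemize}
Only this yields cycles with pairwise different arrows, constant relation length, disjointness of the cycles, and $R$ equal to the union of the associated relations. Uniqueness of the annihilator generator alone gives none of these.

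There is also a flaw in (ii)$\implies$(i). Zimmermann Huisgen's theorem is about \emph{second} syzygies. First syzygies are essentially arbitrary submodules of $\operatorname{rad}P$ and need not be direct sums of modules $Ap$, so ``hence all syzygies are Gorenstein projective'' does not follow from your (a). The repair is easy. You have shown that $\Omega(S(k))=\bigoplus_{s(a)=k}Aa$ is Gorenstein projective for every $k$. The horseshoe lemma and closure of Gorenstein projectives under extensions then make $\Omega(M)$ Gorenstein projective for all $M$, and the criterion you quote gives that $A$ is $1$-Iwanaga-Gorenstein.

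A smaller inaccuracy in (b): $\Omega(Aq)$ is the next term of the perfect sequence, not $Ap$ in general. For a relation cycle, $\Omega^2(Aa_i)\cong Aa_{i+r}$ with indices modulo $|C|$. This does not affect your perfectness claim for subpaths of $C^r$.
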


\begin{Cor}\label{cor:loops}
Let $KQ/I$ be a monomial algebra which is $1$-Iwanaga-Gorenstein.
Then the following hold:
\begin{itemize}\itemsep2mm

\item[(i)]
Each loop $h \in Q_1$ forms a relation cycle $C = h$ for $A$;

\item[(ii)] 
For each $k \in Q_0$, there is at most one loop $h \in Q_1$ 
with $s(h) = k$.

\end{itemize}
\end{Cor}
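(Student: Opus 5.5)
The plan is to derive both parts directly from Theorem~\ref{thm:monomial1}. Since $A$ is monomial and $1$-Iwanaga-Gorenstein, we fix a decomposition $Q_1 = Q_1' \cup \bigcup_{1 \le i \le q} Q_1^{(i)}$ as in Theorem~\ref{thm:monomial1}(ii). Each $Q_1^{(i)}$ is the arrow set of a relation cycle $C_i$ with relation length $r_i \ge 2$, and $R$ is the union of the relations associated to the $C_i$. The key observation is that every arrow of $Q_1'$ belongs to no minimal relation. Indeed, every associated relation is a subpath of some $C_i^{r_i}$, so it only uses arrows of $Q_1^{(i)}$.

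For (i), let $h$ be a loop at $k$. Since $A = KQ/I$ is finite-dimensional, $h^N \in I$ for some $N$. Because $I$ is generated by the paths in $R$, the path $h^N$ contains some $\rho \in R$ as a subpath. Then $\rho = h^s$ with $s \ge 2$, so $h$ belongs to a minimal relation, and hence $h \notin Q_1'$. Thus $h \in Q_1^{(i)}$ for some $i$, and $\rho$ is a relation associated to $C_i$.

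It remains to see that $C_i = h$. Every associated relation of $C_i$ is a subpath of $C_i^{r_i}$, so it uses only arrows of $Q_1^{(i)}$. Since the $Q_1^{(j)}$ are disjoint and $h\in Q_1^{(i)}$, the relation $\rho=h^s$ must be associated to $C_i$ itself. A subpath of length $r_i$ of $C_i^{r_i}$ consisting only of the arrow $h$ forces $C_i$ to contain $h$ in $r_i\ge2$ consecutive positions, unless $C_i$ has length $1$. But the arrows of $C_i$ are pairwise different, so two consecutive positions cannot both be $h$. Hence $|C_i| = 1$ and $C_i = h$. One could also argue more simply: $C_i$ is a cycle through the loop $h$ with distinct arrows, and the subpath $h^{r_i}$ of $C_i^{r_i}$ cannot occur unless $C_i=h$. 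The main care needed here is this bookkeeping about which relations are associated to which cycle. This is the only mildly delicate step.

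For (ii), suppose $h \ne h'$ are loops at $k$. By (i), both $h$ and $h'$ form relation cycles, so $R \ni h^{r}$ and $h'^{r'}$ with $r,r' \ge 2$. These are the only relations involving $h$ or $h'$, since any relation containing $h$ lies in the relations associated to $h$'s cycle. Consequently no path in $\{h,h'\}^*$ other than those containing $h^{r}$ or $h'^{r'}$ lies in $I$. The paths $(hh')^N$ are then nonzero in $A$ for all $N$, which contradicts finite-dimensionality. So at most one loop starts at each vertex.
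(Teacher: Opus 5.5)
Your proof is correct and follows the paper's argument. For (i) you use $h^N\in I$ together with Theorem~\ref{thm:monomial1}(ii), and for (ii) the non-vanishing of all powers of the product of the two loops. You additionally spell out the bookkeeping the paper leaves implicit: each minimal relation uses only arrows of its own cycle, the cycles are disjoint, and distinct arrows force $|C_i|=1$.
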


\begin{proof}
(i): Since $\dim(A) < \infty$, we have $h^r = 0$ for some $r \ge 2$.
Now (i) follows from Theorem~\ref{thm:monomial1}(ii).

(ii):
Suppose there are loops $h_1 \not= h_2$ in $Q_1$ with
$s(h_1) = s(h_2)$.
Then Theorem~\ref{thm:monomial1}(ii) together with (i) implies that
$(h_2h_1)^l \notin I$ for all $l \ge 1$.
A contradiction, since $\dim(A) < \infty$.
\end{proof}

\Example:
Let $A = KQ/I$, where $Q$ is the quiver
$$
\xymatrix{
1 \ar@<0.5ex>[r]^{a_1} & 
2 \ar@<0.5ex>[r]^{a_2}\ar@<0.5ex>[l]^{a_4} & 
3 \ar@<0.5ex>[l]^{a_3} \ar@<0.5ex>[r]^{b_1} & 4 \ar@<0.5ex>[r]^{b_2} & 5 \ar@/^2ex/[ll]^{b_3}
}
$$
and $I$ is generated by 
$$
R = \{ a_2a_1,\; a_3a_2,\; a_4a_3,\; 
a_1a_4,\; b_3b_2b_1,\; b_1b_3b_2,\; b_2b_1b_3 \}.
$$
Then $A$ is $1$-Iwanaga-Gorenstein. 
There is a relation cycle $C_1 = a_4a_3a_2a_1$ 
of length $4$ and relation length $2$, 
and a relation cycle $C_2 = b_3b_2b_1$ of length $3$ and
relation length $3$.
The minimal relation set $R$ consists of the relations associated to $C_1$ and $C_2$.
Note that $A/Ae_1A$ and $A/Ae_3A$ are not $1$-Iwanaga-Gorenstein.
The relation cycle $C_1$ has $4$ cycle decompositions, namely 
$$
(a_4a_3a_2a_1),\quad (a_1a_4,a_3a_2),\quad (a_3a_2,a_1a_4),\quad (a_2a_1a_4a_3).
$$
Here is an illustration of $C_1$:
$$
\xymatrix@-1ex{
& 1 \ar[dr]^{a_1}
\\
2 \ar[ur]^{a_4} && 2 \ar[dl]^{a_2}
\\
&3 \ar[ul]^{a_3}
}
$$
The other relation cycle $C_2$ has $3$ cycle decompositions.

\begin{Thm}\label{thm:monomial2}
Let $A = KQ/I$ be a monomial algebra, and let $R$ be the associated minimal
relation set.
Then the following are equivalent:
\begin{itemize}\itemsep2mm

\item[(i)]
$A/AeA$ is $1$-Iwanaga-Gorenstein for each $e = e^2$ in $A$;

\item[(ii)]
There is a disjoint union 
$$
Q_1 = Q_1' \cup \bigcup_{1 \le i \le q} Q_1^{(i)}
$$
such that for each $1 \le i \le q$
the arrows in $Q_1^{(i)}$ form a relation cycle $C_i$ for $A$ of length $|C_i| = |Q_1^{(i)}|$ and relation
length 
$$
r_i \ge |C_i| - 1,
$$ 
and
$$
R = \bigcup_{1 \le i \le q} \{ \text{relations associated to $C_i$}  \}. 
$$

\end{itemize}
\end{Thm}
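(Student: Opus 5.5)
The plan is to reduce everything to the Lu--Zhu characterization, Theorem~\ref{thm:monomial1}, applied to all idempotent quotients. By the discussion of idempotent ideals, every $A/AeA$ is of the form $A/Ae_UA$ for some $U \subseteq Q_0$. Moreover, $A/Ae_UA$ is again monomial, namely $KQ_U/I_U$, where $Q_U$ is obtained by deleting the vertices in $U$ and their incident arrows. Its minimal relation set is
$$
R_U = \{ \rho \in R \mid \{\rho\}_0 \cap U = \varnothing \}.
$$
So (i) says that for every $U$ the pair $(Q_U,R_U)$ satisfies condition (ii) of Theorem~\ref{thm:monomial1}.

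For (i) $\implies$ (ii), I take $U = \varnothing$. Theorem~\ref{thm:monomial1} then gives the decomposition of $Q_1$ into $Q_1'$ and relation cycles $C_i$ of relation length $r_i \ge 2$, with $R$ the union of the associated relations. It remains to show $r_i \ge |C_i|-1$. Suppose instead that $r_i \le |C_i|-2$ for some $i$, and write $C = C_i = a_m\cdots a_1$, $r = r_i$. I choose a vertex $k$ through which $C$ runs and delete it, taking $U=\{k\}$.
\begin{itemize}
\item The relations associated to $C$ that do not pass through $k$ survive in $R_U$.
\item Since $r \le m-2$, at least one such relation survives; for instance, if $k = s(a_1)$ occurs only once along $C$, then $a_{r+1}\cdots a_2$ avoids $k$.
\item The surviving arrows of $C$ no longer form a cycle in $Q_U$, because $C$ is broken at $k$.
\end{itemize}
A surviving relation of $C$ must belong to some relation cycle of $Q_U$ in the decomposition given by Theorem~\ref{thm:monomial1}. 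Its arrows lie in $Q_1^{(i)}$, and in $Q_U$ these arrows form a union of paths rather than a cycle. Every relation cycle of $Q_U$ uses pairwise distinct arrows forming a closed cycle. The arrows of a relation are covered by the relation cycle containing it, and the relations of $C$ are disjoint from those of the other $C_j$ because the $Q_1^{(j)}$ are disjoint. From this I would extract a contradiction.

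The delicate point is the choice of $k$ when $C$ passes through vertices several times. I would choose $k$ of multiplicity one if possible. Otherwise I would still argue using the maximal path segments of $C$ avoiding $k$: if $k$ occurs at two positions at distance $\ge r+1$ along $C$, some segment contains a relation. If no such position pattern exists, I would choose $k$ maximizing the gap between consecutive occurrences; the gaps sum to $m \ge r+2$, and I expect some arrangement to produce a segment of length $\ge r$ avoiding $k$. This combinatorial pigeonhole step is where I expect the main obstacle.

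For (ii) $\implies$ (i), I fix $U$ and compute $R_U$. The relations of $C_i$ avoiding $U$ survive. If $C_i$ meets $U$, the length of a segment of $C_i$ between consecutive visits to $U$ is at most $m_i - 1 \le r_i$. When only one vertex of $U$ is hit once, that segment has exactly $m_i-1$ arrows, and it contains a relation only if $m_i - 1 \ge r_i+1$, i.e. never. With more visits the segments are even shorter, so no relation of $C_i$ survives. Hence $R_U$ is the union of the relations of those $C_i$ avoiding $U$. The arrows of the other cycles go into $Q_1'$, and the decomposition of Theorem~\ref{thm:monomial1}(ii) holds for $A/Ae_UA$ with the same $r_i \ge 2$. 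One small check remains: when $r_i = 1$ is excluded, we still need $r_i \ge 2$. This holds because minimal relations have length at least two.
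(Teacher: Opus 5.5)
\textbf{The main gap: cycles with no vertex of multiplicity one.} Your (ii)$\implies$(i) is fine and is the paper's argument; the paper deletes one vertex at a time and inducts, while you delete $U$ at once. The gap is in (i)$\implies$(ii), when no vertex of $C$ has multiplicity one. Here the pigeonhole step you hope for is false, and no vertex deletion can help.

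Take the configuration of Figure~\ref{fig:1}(2) with $s=3$. That is, $C=b_3b_2b_1a_3a_2a_1$ with $a_j,b_j$ both going from $j$ to $j+1$ (mod $3$), so $m=6$, and take $r=2\le m-2$. Every window of length $2$ runs through all three vertices. Hence deleting any vertex of $C$ kills every relation of $C$, and the contradiction cannot come from vertex deletions. It has to come from $\dim_K A<\infty$, which your argument never uses.

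The missing lemma is that every relation cycle $C=a_m\cdots a_1$ has a vertex of multiplicity one. Let $\sigma(a_j)=a_{j+1}$ (indices mod $m$), and suppose every vertex of $C$ has multiplicity at least $2$.
\begin{enumerate}
\item At least two arrows of $C$ start at each $t(a_j)$. So you can build arbitrarily long paths $w_n\cdots w_1$ in arrows of $C$ with $w_{l+1}\ne\sigma(w_l)$ for all $l$.
\item A relation occurring as a subpath of such a path consists of arrows of $C$. Since the sets $Q_1^{(j)}$ are disjoint, it is associated to $C$, so its consecutive arrows are $\sigma$-consecutive.
\item This is impossible because $r\ge 2$. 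Hence all these paths are nonzero in $A$, contradicting finite-dimensionality.
\end{enumerate}
Once a multiplicity-one vertex $k$ exists, your argument with $U=\{k\}$ goes through.

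The paper handles this issue differently. It takes a cycle decomposition $(D_t,\ldots,D_1)$, deletes $s(D_i)$ to get $r\ge |D_i|-1$, and uses finite-dimensionality (Claims~2--4) to force $t=1$. That route also yields multiplicity-freeness (Corollary~\ref{cor:monomial2}), which is used later and which your route does not give directly.

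\textbf{A smaller gap: the final contradiction.} The contradiction after deleting $k$ must be made explicit. Saying ``the arrows of $C$ form a union of paths in $Q_U$'' is not yet a contradiction, because the relation cycle $C'$ of $A/Ae_kA$ carrying the surviving $\rho$ could a priori use arrows outside $Q_1^{(i)}$. Argue as follows.
\begin{enumerate}
\item Write $C'=c_{m'}\cdots c_1$. It has relation length $|\rho|=r$.
\item Each window $c_{l+r-1}\cdots c_l$ of $(C')^r$ lies in $R'\subseteq R$, so its arrows lie in a single $Q_1^{(j)}$.
\item Consecutive windows share $r-1\ge 1$ arrows, and one window is $\rho$. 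So all windows lie in $Q_1^{(i)}$.
\item Being windows of $C^r$, they give $c_{l+1}=\sigma(c_l)$ for all $l$.
\end{enumerate}
Hence $C'$ contains every arrow of $C$, including an arrow incident to $k$, which is absurd in $Q_U$.
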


\begin{proof}
(ii) $\implies$ (i):
Assume that (ii) holds.
First, observe that (ii) implies that $A$ is $1$-Iwanaga-Gorenstein
by
Theorem~\ref{thm:monomial1}.
Now
pick a vertex $k$ of $Q$.
If $k \in \{ C_i \}_0$ for some relation cycle $C_i$ for $A$ of relation length $r_i$, then for each
relation $\rho$ associated to $C_i$ we have 
$k \in \{ \rho \}_0$.
(Here we use that $r_i \ge |C_i|-1$.)
Thus none of these relations is a path in the quiver of $A/Ae_kA$.
All relation cycles $C_j$ for $A$ with $k \notin \{C_j\}_0$ remain intact.
Thus their relations 
are the minimal relations for the monomial algebra $A/Ae_kA$.

Therefore $A/Ae_kA$ still satisfies condition (ii).
Now one can proceed by induction and gets that $A/AeA$ is $1$-Iwanaga-Gorenstein for all $e=e^2$ in $A$.

(i) $\implies$ (ii):
Assume that (i) holds.
Since $A$ is $1$-Iwanaga-Gorenstein, we get relation cycles for $A$ as described in Theorem~\ref{thm:monomial1}(ii).

Let $C = a_m \cdots a_2a_1$ be a relation cycle for $A$ of length $m \ge 1$ 
and relation length $r \ge 2$.
We have to show that $r \ge |C|-1$.

If there is a loop 
$h$ which belongs to $C$, then $C = h$, compare Corollary~\ref{cor:loops}(i).
In this case, we are done.

Thus, we can assume that $m \ge 2$ and that there are no loops belonging to  $C$.
Let $(D_t,\ldots,D_1)$ be a cycle decomposition of $C$.
Without loss of generality, we assume that 
$C = D_t \cdots D_2D_1$, $\min\{|D_t|,\ldots,|D_1|\} = |D_1|$ and that $D_1$ is multiplicity-free.
We can also assume that $D_1 = a_s \cdots a_2a_1$ and $s(a_i) = i$ for $1 \le i \le s$.
Thus $D_1$ looks as in Figure~\ref{fig:1}(1).

Claim 1: For each $1 \le i \le t$, we have $r \ge |D_i| - 1$.

Proof: Assume that $r \le |D_i|-2$ for some $i$.
Then there is a minimal relation $\rho \in R$ associated to
$C$ such that $1 \notin \{ \rho \}_0$.
Thus $\rho$ is still a minimal relation for $A/Ae_1A$.
However, since we removed $e_1$, we see that $\rho$ is no 
longer associated to a relation cycle for $A/Ae_1A$.
Thus by Theorem~\ref{thm:monomial1}, $A/Ae_1A$ is not $1$-Iwanaga-Gorenstein, a contradiction. $\square$

Claim 2: Let $t \ge 2$. Then we have $r \le |D_1|$. 

Proof: Let $t \ge 2$ and $r > |D_1|$. Then 
$(a_s \cdots a_2a_1)^l \notin I$ for all $l \ge 1$. A contradiction,
since $\dim(A) < \infty$. $\square$

Thus for $t \ge 2$, we have $r= s$ or $r = s-1$.

Claim 3: 
Let $t \ge 2$. 
Then for $2 \le i \le t$, we have
$\{ D_i \}_0 \subseteq \{ D_1 \}_0$.

Proof:
Since $|D_1| = s$ and $r \le s$, we know that
there is a subpath $\rho$ of $D_1$ such that
$\rho$ is a minimal relation for $A$.
Assume now that $k \in \{ D_i \}_0 \setminus \{ D_1 \}_0$.
Then $\rho$ does not run through $k$. 
Thus $\rho$ is still a minimal relation for $A/Ae_kA$.
However, 
$\rho$ is no 
longer associated to a relation cycle for $A/Ae_kA$.
Thus $A/Ae_kA$ is not $1$-Iwanaga-Gorenstein, a contradiction.
$\square$

Claim 4: 
We have $t=1$.

Proof:
Assume that $t \ge 2$, and let $D_i = b_{s_i} \cdots b_2b_1$. 
We know that there is no loop belonging to $C$.
Thus $|D_i| \ge |D_1| \ge 2$. 
We obviously have $s(a_1) = s(b_1)$.
We know from Claim 3 that $t(b_1) \in \{ 1,\ldots,s \}$.
Since there are no loops in $C$, we have $t(b_1) \not= 1$.
Assume that $t(b_1) \ge 3$. 
Thus the path $p := a_s \cdots a_{t(b_1)+1}a_{t(b_1)}$ has length at most $|D_1|-2$.
We know that $r \ge |D_i|-1$.
It follows that $(pb_1)^l \notin I$ for all $l \ge 1$.
A contradiction, since $\dim(A) < \infty$.
Thus we get $t(b_1) = t(a_1)$ and therefore $s(b_2) = s(a_2)$.
Analogously one shows now that $t(b_i) = t(a_i)$
for $2 \le i \le s$.
Thus we are in the situation of Figure~\ref{fig:1}(2).
We get
$(a_sb_{s-1} \cdots a_2b_1)^l \notin I$ for all $l \ge 1$,
a contradiction.
$\square$

We have shown that $C = D_1$
Combining this with Claim 1 yields that for each relation cycle
$C$ of relation length $r$, we have $r \ge |C|-1$.
This finishes the proof.
\end{proof}

\begin{Cor}\label{cor:monomial2}
Each relation cycle appearing in Theorem~\ref{thm:monomial2} is multiplicity-free.
\end{Cor}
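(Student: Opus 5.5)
The corollary should follow from the proof of Theorem~\ref{thm:monomial2}, direction (i) $\implies$ (ii), rather than from the statement of (ii) alone. The plan is to take a relation cycle $C$ from Theorem~\ref{thm:monomial2}(ii) and show that some rotation of $C$ has a cycle decomposition with a single part, $t=1$. That single part then has the form $D_1$ with ${\rm mult}_{s(D_1)}(D_1)=1$, and we arrange it to be multiplicity-free.

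The case of a loop is immediate. By Corollary~\ref{cor:loops}(i), if a loop belongs to $C$ then $C=h$, which runs through only one vertex and so is multiplicity-free.

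Otherwise $m\ge 2$, and we choose a cycle decomposition $(D_t,\ldots,D_1)$ of $C$ as in the proof. The steps are as follows.
\begin{itemize}
\item Rotate so that $D_1$ has minimal length among the $D_i$.
\item Check that $D_1$ is then multiplicity-free. This is the point the proof asserts ``without loss of generality'', so it deserves care. Suppose a vertex $k$ occurs twice in $D_1$. Splitting $D_1$ at $k$ gives a cycle decomposition of a rotation of $C$ based at $k$ whose first factor is strictly shorter than $|D_1|$. Iterating this splitting, which terminates by length, produces a decomposition whose shortest piece is multiplicity-free.
\end{itemize}
With this choice, Claims 1--4 apply verbatim and give $t=1$, so $C=D_1$ up to rotation. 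A rotation of a multiplicity-free cycle is multiplicity-free, since multiplicities are invariant under $\sim_r$. Hence $C$ is multiplicity-free.

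The main obstacle is that Claims 2--4 use the minimality of $|D_1|$ and its multiplicity-freeness simultaneously, so the reduction must preserve both. If the splitting above does not obviously keep $D_1$ minimal among all pieces, I will argue directly instead. Take a vertex $k$ with ${\rm mult}_k(C)\ge 2$ and decompose $C$ at $k$, giving $t\ge 2$. Choose a piece of minimal length, and if it is not multiplicity-free, repeat at a repeated vertex inside it. The resulting short, multiplicity-free cycle based at some vertex $k'$ is a piece of a decomposition of $C$ at $k'$ with $t\ge 2$, and we make it minimal by choosing the basepoint $k'$ to minimize the shortest return length.

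This last choice is the key: take $k'$ and a subpath of $C^2$ from $k'$ back to $k'$ of globally minimal length $\ell$ among all returns at all vertices. Such a minimal return is automatically multiplicity-free and is no longer than any other piece. Claims 2--4 then give a contradiction, which proves that ${\rm mult}_k(C)\le 1$ for all $k$.
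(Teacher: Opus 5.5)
Your proposal follows the paper's proof: the corollary is read off from the end of the proof of Theorem~\ref{thm:monomial2}, where Claim~4 gives $t=1$ and hence $C=D_1$ with $D_1$ multiplicity-free, and your choice of a globally shortest return (automatically multiplicity-free and no longer than any other piece of the decomposition at its base vertex) correctly justifies the paper's ``without loss of generality''. One caveat, which you inherit from the paper rather than introduce: Claim~4 does not quite go through verbatim when $D_i=D_2$ immediately follows $D_1$ and $r$ is as small as allowed (if $t(b_1)=3$ and $r=s-1$, then $(pb_1)^2$ contains the relation $b_1a_s\cdots a_3$; if $s=t=r=2$, the final cycle $a_2b_1$ has $(a_2b_1)^2$ containing the relation $b_1a_2$), but this is easily repaired, since in the first case $b_1a_s\cdots a_3$ avoids vertex $2$, so $A/Ae_2A$ is not $1$-Iwanaga-Gorenstein by the argument of Claims~1 and~3, and in the second case $(a_2b_1b_2a_1)^l\notin I$ for all $l\ge 1$.
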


\begin{proof}
This is a consequence of the final paragraph of the proof of Theorem~\ref{thm:monomial2}(ii).
\end{proof}

\begin{figure}[h]
\renewcommand{\arraystretch}{4.0}
\begin{tabular}{lll}
$
\xymatrix@-3.5ex{
\text{(1)}&& 1 \ar[drr]^{a_1}&&
\\
s \ar[urr]^{a_s}&&&& 2 \ar[dd]^{a_2}
\\
&
\\
\cdots \ar[uu]&&&& 3 \ar[dll]^{a_3}
\\
&& 4 \ar[ull]&&
}
$
&
$
\xymatrix@-3.5ex{
\text{(2)} && 1 \ar[drr]^{a_1}\ar@<-1ex>[drr]_{b_1}&&
\\
s \ar[urr]^{a_s}\ar@<-1ex>[urr]_{b_s}&&&& 2 \ar[dd]^{a_2}\ar@<-1ex>[dd]_{b_2}
\\
&
\\
\cdots \ar[uu]\ar@<-1ex>[uu]&&&& 3 \ar[dll]^{a_3}\ar@<-1ex>[dll]_{b_3}
\\
&& 4 \ar[ull]\ar@<-1ex>[ull]
}
$
\end{tabular}
\caption{Illustrations for the proof of Theorem~\ref{thm:monomial2}}.
\label{fig:1}
\end{figure}

\subsection{Characterization of monomial $E$-symmetric  algebras}

\begin{Thm}\label{thm:monomial3}
Let $A = KQ/I$ be a monomial algebra.
Then the following are equivalent:
\begin{itemize}\itemsep2mm

\item[(i)]
$E(M) = E^-(M)$ for all $M \in \md(A)$ {\rm (= (S1))};

\item[(ii)]
$\taureg(A) = \tau^-$-$\reg(A)$ {\rm (= (S2))};

\item[(iii)]
$\irr^\tau(A) = \irr^{\tau^-}(A)$ {\rm (= (S3))};

\item[(iv)]
$A \cong \cP(Q,\tilde{W})$ for some hyperpotential $\tilde{W}$.

\end{itemize}
\end{Thm}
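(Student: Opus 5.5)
The plan is to close the cycle of implications. The chain (i) $\implies$ (ii) $\implies$ (iii) holds for all algebras, as noted after the list of conditions (S1)--(S4). The implication (iv) $\implies$ (i) is Theorem~\ref{thm:jacobian}. So it remains to prove (iii) $\implies$ (iv).

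Assume (iii). By \cite[Theorem~1.7]{BS25}, (S3) implies (S4), so $A/AeA$ is $1$-Iwanaga-Gorenstein for every idempotent $e$. Theorem~\ref{thm:monomial2} then gives a disjoint decomposition
$$
Q_1 = Q_1' \cup \bigcup_{1 \le i \le q} Q_1^{(i)},
$$
in which each $Q_1^{(i)}$ forms a relation cycle $C_i$ of relation length $r_i \ge |C_i|-1$. The minimal relation set $R$ is exactly the union of the relations associated to the $C_i$. By Corollary~\ref{cor:monomial2}, each $C_i$ is multiplicity-free, and the arrows of distinct cycles are distinct.

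The central step is to write down a hyperpotential. For each $i$, set $m_i := |C_i|$ and $r_i = m_i - 1 + \ell_i$ with $\ell_i \ge 0$. The natural candidate is the cyclic word $W_i := C_i^{\ell_i+1}$, possibly with a scalar coefficient $1/(\ell_i+1)$ if the hyperpotential formalism requires it; in positive characteristic this is exactly why Ladkani's hyperpotentials are needed. Put $\tilde{W} := \sum_i W_i$.

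The cyclic derivative of $W_i$ with respect to an arrow $a$ of $C_i$ is, up to a scalar, the path of length $(\ell_i+1)m_i - 1$ obtained by deleting $a$ from $C_i^{\ell_i+1}$. This is too long. It would match $R$ only if $r_i = (\ell_i+1)m_i - 1$, that is, only when $\ell_i = 0$.

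For $\ell_i > 0$ I would therefore use the extra flexibility of hyperpotentials. Following \cite{L14}, a hyperpotential allows arbitrary cyclically compatible families of relations, one for each arrow. For each arrow $a$ of $C_i$, I assign the relation consisting of the subpath of $C_i^{r_i}$ of length $r_i$ starting at $t(a)$. As $a$ runs over the $m_i$ arrows of $C_i$, these are exactly the $m_i$ relations associated to $C_i$. I then check the compatibility (cyclicity) condition in the definition of a hyperpotential; for monomial data of this shape it should amount to the family of paths being invariant under rotation of $C_i$. Arrows in $Q_1'$ get the zero relation.

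Finally I verify that $\cP(Q,\tilde{W}) = KQ/\langle R\rangle = A$. Reducedness should follow from $r_i \ge 2$ (Theorem~\ref{thm:monomial1}), which guarantees that no relation has length below $2$. Finite-dimensionality is inherited from $A$.

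I expect the main obstacle to be matching the precise definition of a (reduced) hyperpotential from \cite{L14} and \cite{GHLS26}. Specifically, I would need to confirm that rotation-invariant families of paths of length $r_i$ with $m_i - 1 \le r_i$ really are Jacobian relations, and that the loop case $m_i = 1$ (a single loop $h$ with $h^{r_i}=0$) is covered; that case is where the hyperpotential formalism departs from ordinary potentials. The bound $r_i \ge m_i - 1$ should be exactly what makes each relation, together with its arrow $a$, close up to a cyclic word running once around $C_i$ or more, which is the shape a hyperpotential requires.
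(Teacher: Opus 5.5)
The reduction to (iii) $\Rightarrow$ (iv), and the passage through [BS25, Theorem~1.7], Theorem~\ref{thm:monomial2} and Corollary~\ref{cor:monomial2}, agree with the paper. The central step, however, fails and cannot be repaired in the form you propose. After extracting (S4) you never use (iii) again, and (S4) does not imply (iv).

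The algebra of Remark~(iv) in Section~\ref{sec:1} (arrows $a\colon 1\to 2$, $b\colon 2\to 1$, relations $ab=ba=0$) is a counterexample to your recipe. It is monomial, satisfies (S4), and has exactly the shape of Theorem~\ref{thm:monomial2}(ii), with a single relation cycle $ba$ of length $2$ and relation length $2$. Your recipe would therefore assert a reduced hyperpotential $\tilde W$ with $\cP(Q,\tilde W)\cong A$. But this algebra violates (S3), so by Theorem~\ref{thm:jacobian} no such $\tilde W$ exists.

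The concrete failure is the endpoint condition. The component of a hyperpotential attached to an arrow $a$ plays the role of the cyclic derivative $\partial_a W$. It must be a combination of paths from $t(a)$ to $s(a)$, so that it closes up with $a$ to a cycle. Along the multiplicity-free cycle $C_i$, such a path has length $\equiv -1 \pmod{m_i}$. So your family (the length-$r_i$ path starting at $t(a)$, attached to $a$) has the right endpoints only if $r_i = t_i m_i - 1$ for some $t_i\ge 1$. The bound $r_i \ge m_i-1$ does not give this, so your closing claim about $r_i \ge m_i-1$ is false. In the example, the length-$2$ path starting at $t(a)=2$ is $ab$, which ends at $2\neq s(a)$.

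Your parametrization $W_i=C_i^{\ell_i+1}$ with $r_i=m_i-1+\ell_i$ hides this. When $r_i=t_im_i-1$, the power $C_i^{t_i}$ already yields exactly the relations associated to $C_i$; when ${\rm char}(K)$ divides $t_i$ it is replaced by the corresponding hyperpotential. That characteristic issue is the only role hyperpotentials play here. The loop case $m_i=1$ imposes no length condition at all, since $r_i=(r_i+1)\cdot 1-1$.

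The missing idea is a second use of (iii) to force $m_i \mid r_i+1$, and this is where the paper's work lies.
\begin{itemize}
\item Since (S3) passes to $A/AeA$ (Corollary~\ref{cor:reduction4}), one may kill all vertices off a given relation cycle $C=a_s\cdots a_1$ and assume that $C$ runs through every vertex.
\item If $r=s-1$ there is nothing to show.
\item If $r\ge s$, finite-dimensionality rules out any further non-loop arrow and any two loops.
\item A single loop $h$ with $h^p\in R$ is excluded by the uniserial module $M=M(a_{s-1}\cdots a_1h^{p-1})$. It has $E(M)=0$ but $E^-(M)=p-1\neq 0$, so $\overline{\cO_M}\in\irr^\tau(A)\setminus\irr^{\tau^-}(A)$.
\item Then $A$ is a cyclic Nakayama algebra, and [BS25, Proposition~1.10] gives $r=ts-1$ with $t\ge 2$.
\end{itemize}
Only once every $r_i$ has the form $t_i|C_i|-1$ do the powers $C_i^{t_i}$ assemble into a reduced hyperpotential $\tilde W$ with $A\cong\cP(Q,\tilde W)$.
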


\begin{proof}
We know already from the introduction that
$$
\text{(i) $\implies$ (ii) $\implies$ (iii)}.
$$
By Theorem~\ref{thm:jacobian} we know also that (iv) $\implies$ (i).
We now assume that (iii) holds. We need to show that this implies (iv).

By \cite[Theorem~1.7]{BS25}, condition (iii) implies that $A/AeA$ is $1$-Iwanaga-Goren\-stein for each $e=e^2$ in $A$.
Thus we are in the situation of Theorem~\ref{thm:monomial2}(ii).

If all relation cycles $C_i$ have relation length $r_i = t_i|C_i| -1$
for some $t_i \ge 1$, then the associated relations form a
reduced hyperpotential $\tilde{W}$ for $Q$ and we get 
$A = KQ/I \cong \cP(Q,\tilde{W})$.

Let $C = a_s \cdots a_2a_1$ be a relation cycle for $A$ of relation length
$r$.
We know from Theorem~\ref{thm:monomial2}(ii) 
and Corollary~\ref{cor:monomial2} that 
$C$ is multiplicity-free and that 
$$
r \ge s-1.
$$
We illustrate $C$ in Figure~\ref{fig:1}(1).
We assume now without loss of generality that 
$s(a_1),\ldots,s(a_s)$ are the only vertices of $Q$, and that $s(a_i) = i$
for $1 \le i \le s$.
Here we use Corollary~\ref{cor:reduction4}.

If $r = s-1$, then we are done, since the relations associated to $C$ will be part of a hyperpotential $\tilde{W}$.

Thus, we assume from now on that 
$$
r \ge s.
$$

Claim 1: There is no $b \in Q_1$ with
$s(b) \not= t(b)$ and
$b \not= a_i$ for all $1 \le i \le s$.

Proof:
Assume there is such an arrow $b$. Without loss of generality, let
$s(b) = 1$ and $t(b) = i \not= 1$.
This is illustrated in Figure~\ref{fig:2}(1).
Then $(a_s \cdots a_{i+1}a_ib)^l \notin I$ for all $l \ge 1$. 
A contradiction, since $\dim(A) < \infty$. $\square$

Claim 2:
$Q$ has at most one loop.

Proof:
By Corollary~\ref{cor:loops}(ii), there is at most one loop at each vertex of $Q$.
Next, suppose that $Q$ has two different loops $h_1$ and $h_2$.
Without loss of generality let $s(h_1) = 1$ and $s(h_2) = i \not= 1$.
We illustrated this in Figure~\ref{fig:2}(2).
Then $
(a_s \cdots a_ih_ia_{i-1} \cdots a_1h_1)^l \notin I$ for all $l \ge 1$. 
A contradiction, since $\dim(A) < \infty$. $\square$

Claim 3: $Q$ has no loop.

Proof: Assume that $Q$ has exactly one loop $h$.
Without loss of generality, let $s(h) = 1$.
This situation is illustrated in Figure~\ref{fig:2}(3).
If $r > s$, then $(a_s \cdots a_2a_1h)^l \notin I$ for all $l \ge 1$.
A contradiction, since $\dim(A) < \infty$. 
Thus $r=s$.
There is a minimal relation $h^p \in R$ for some $p \ge 2$.
Let $M$ be the uniserial string module
$$
M = M(a_{s-1} \cdots a_2a_1h^{p-1}).
$$
Note that $\dimv(M) = (p,1,\ldots,1)$.
Then we have $E(M) = 0$ and $E^-(M) = p-1 \not= 0$.
This is easily calculated by computing the minimal projective presentation 
$$
P(2)^{p-1} \to P(1) \to M \to 0
$$
and the minimal injective presentation 
$$
0 \to M \to I(s) \to I(s)^{p-1}
$$
of $M$.
This gives us the $g$-vectors 
$g(M) = (-1,p-1,0\ldots,0)$ and $g^-(M) = (0,\ldots,0,p-2)$, respectively.
Then one uses the well known formulas
\begin{align*}
E(M) &= \dim \End_A(M) + \bil{g(M),\dimv(M)},
\\
E^-(M) &= \dim \End_A(M) + \bil{g^-(M),\dimv(M)}.
\end{align*}
Thus the irreducible component
$$
\cZ := \overline{\cO_M}
$$
in generically $\tau$-regular but not $\tau^-$-regular.
This proves that $\irr^\tau(A) \not= \irr^{\tau^-}(A)$,
a contradiction. $\square$

Claim 4: 
We have $r = ts-1$ for some $t \ge 2$.

Proof:
We know by Claims 1 and 3 that the only arrows of $Q$ are $a_1,\ldots,a_s$. 
Thus $A$ is a cyclic Nakayama algebra.
Then the claim follows from
\cite[Proposition~1.10]{BS25}. $\square$

This finishes the proof.
\end{proof}

\begin{figure}[h]
\renewcommand{\arraystretch}{4.0}
\begin{tabular}{lll}
$
\xymatrix@-3.5ex{
\text{(1)} && 1 \ar[drr]^{a_1}\ar[dddd]_b&&
\\
s \ar[urr]^{a_s}&&&& 2 \ar[dd]^{a_2}
\\
&&&&
\\
\cdots \ar[uu]&&&& \cdots \ar[dll]
\\
&& i \ar[ull]^{a_i}&&
}
$
&
$
\xymatrix@-3.5ex{
\text{(2)}&& 1 \ar[drr]^{a_1}\ar@(ur,r)[]^{h_1}&&
\\
s \ar[urr]^{a_s}&&&& 2 \ar[dd]^{a_2}
\\
&
\\
\cdots \ar[uu]&&&& \cdots \ar[dll]
\\
&& i \ar[ull]^{a_i}\ar@(r,dr)[]^{h_2}&&
}
$
&
$
\xymatrix@-3.5ex{
\text{(3)}&& 1 \ar[drr]^{a_1}\ar@(ur,r)[]^{h}&&
\\
s \ar[urr]^{a_s}&&&& 2 \ar[dd]^{a_2}
\\
&
\\
\cdots \ar[uu]&&&& 3 \ar[dll]^{a_3}
\\
&& 4 \ar[ull]&&
}
$
\end{tabular}
\caption{Illustrations for the proof of Theorem~\ref{thm:monomial3}}.
\label{fig:2}
\end{figure}

\subsection{Related results}
The following theorem is another characterization of
monomial Jacobian algebras.
For the missing definitions, we refer to the cited publications.

\begin{Thm}[\cite{KR07}, \cite{A09}, \cite{GE21}]\label{thm:monomial4}
Let $A = KQ/I$ be a monomial algebra.
Then the following are equivalent:
\begin{itemize}\itemsep2mm

\item[(i)]
$A$ is $2$-Calabi-Yau tilted;

\item[(ii)]
$A$ is $1$-Iwanaga-Gorenstein and stably $3$-Calabi-Yau;

\item[(iii)]
$A \cong \cP(Q,\tilde{W})$ for some hyperpotential $\tilde{W}$.

\end{itemize}
\end{Thm}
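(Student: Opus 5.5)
This theorem is a cited result (Keller--Reiten, Amiot, Garcia Elsener), so I would not reprove it from scratch. Instead I would assemble it from known implications and connect it with Theorem~\ref{thm:monomial2}. The plan is to prove (iii) $\implies$ (i) $\implies$ (ii) $\implies$ (iii).

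For (iii) $\implies$ (i), I would use Amiot's generalized cluster category. When $\cP(Q,\tilde{W})$ is finite-dimensional, the Ginzburg dg algebra yields a Hom-finite $2$-Calabi-Yau category $\mathcal{C}_{(Q,\tilde{W})}$. In this category the image of the dg algebra is a cluster-tilting object, and its endomorphism algebra is the Jacobian algebra. For hyperpotentials I would appeal to Ladkani's extension of this construction, which takes the place of ordinary potentials.

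For (i) $\implies$ (ii), I would quote Keller--Reiten. Every $2$-Calabi-Yau tilted algebra is Gorenstein of injective and projective dimension at most $1$, and its stable category of Cohen--Macaulay modules is $3$-Calabi-Yau. This step needs no monomiality.

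The real content is (ii) $\implies$ (iii), and here I would use monomiality together with Theorem~\ref{thm:monomial1}. Since $A$ is $1$-Iwanaga-Gorenstein, Theorem~\ref{thm:monomial1} decomposes $R$ into the relations associated to relation cycles $C_i$ with relation lengths $r_i \ge 2$. The stable category $\underline{\md}(A)$ restricted to $\cP_{\le 1}$-complementary objects, i.e.\ Gorenstein projectives, is then described combinatorially. Following Garcia Elsener, its indecomposables are the uniserial modules given by the proper subpaths of the cycles $C_i$. The syzygy functor acts on them by the rotation dictated by $r_i$, so $\Omega^2$ shifts by $r_i$ arrows along $C_i$.

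I would then write down the Serre functor of this stable category explicitly as a shift along each cycle. Requiring it to equal $\Omega^{-3}$ (i.e.\ $[3]$) gives a numerical condition on each cycle, which I expect to be $r_i \equiv -1 \pmod{|C_i|}$, that is, $r_i = t_i|C_i|-1$. By the discussion in the proof of Theorem~\ref{thm:monomial3}, precisely these relations assemble into a reduced hyperpotential $\tilde{W}$, whose cyclic derivatives are the paths associated to the $C_i$. This gives $A \cong \cP(Q,\tilde{W})$.

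The main obstacle is this last computation. It requires identifying the Serre functor of the stable Gorenstein-projective category of a monomial Gorenstein algebra and matching the Calabi-Yau condition exactly with $r_i = t_i|C_i| - 1$. The Nakayama-algebra case handled in \cite[Proposition~1.10]{BS25} serves as a model and a consistency check.
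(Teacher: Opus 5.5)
Your plan matches the paper's proof, which also proceeds purely by citation: Amiot \cite[Corollary~3.6]{A09} (extended to hyperpotentials by Ladkani \cite{L14}) for (iii)$\implies$(i), Keller--Reiten \cite[Theorem~3.3]{KR07} for (i)$\implies$(ii), and Garc{\'i}a Elsener \cite[Theorem~3.5 and Remark~3.6]{GE21} for (ii)$\implies$(iii). The paper does not carry out the Serre-functor computation you call the main obstacle, since that is what the citation of Garc{\'i}a Elsener supplies, so rely on it rather than on your sketch, which is imprecise in places (e.g.\ the indecomposable non-projective Gorenstein projectives $Ap \cong P(t(p))/Aq$ need not be uniserial when further arrows leave vertices of $C_i$, and they are indexed by the subpaths of $C_i^{r_i}$ of length $1,\dots,r_i-1$ rather than by subpaths of $C_i$).
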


The implication (ii) $\implies$ (iii) is due to Garc{\'i}a Elsener \cite[Theorem~3.5 and Remark~3.6]{GE21}, and (i) $\implies$ (ii)
and (iii) $\implies$ (i) are general results (which hold for all finite-dimensional $K$-algebras $A$) by
Keller--Reiten \cite[Theorem~3.3]{KR07} and
Amiot \cite[Corollary~3.6]{A09}, respectively.
Ladkani \cite{L14} proved that one can work with hyperpotentials intead of potentials.

In general, 
for a $1$-Iwanaga-Gorenstein algebra $A$, it is unclear how the 
additional assumptions
stably $3$-Calabi-Yau and $E$-symmetric are related.


\section{Selfinjective $E$-symmetric algebras}\label{sec:5}


\subsection{Twisted periodic algebras}

Let $M \in \md(A)$.
Then $M$ is
\emph{$\Omega$-periodic}
if 
$$
\Omega_A^m(M) \cong M
$$
for some $m \ge 1$.
Here $\Omega_A(M)$ is by definition the kernel of the projective
cover $P \to M$.

Let $A^e :=  A \otimes_K A^\op$ denote the
\emph{enveloping algebra} of $A$.
Recall that $A^e$ acts on $A$ by
$$
(x \otimes y)a := xay.
$$
Then
$A$ is a \emph{periodic algebra}
if $A$ is $\Omega$-periodic as an $A^e$-module.
In this case, all indecomposable non-projective $M \in \md(A)$ are $\Omega$-periodic, see for example \cite[Theorem~11.19(i)]{SY11}.

For a $K$-algebra automorphism $\sigma\df A \to A$, let
${_\sigma}A_1$ be the $A^e$-module defined by
$$
(x \otimes y)a := \sigma(x)ay
$$
where the underlying vector space is $A$.
Then
$A$ is 
\emph{twisted $m$-periodic} (or just \emph{twisted periodic})
with respect to $\sigma$
if
there exists some $m \ge 1$ such that 
$$
\Omega^m_{A^e}(A) \cong {_\sigma}A_1
$$ 
in $\md(A^e)$.

Obviously, each periodic algebra is twisted periodic.
Green--Snashall--Solberg \cite[Lemma~1.5]{GSS03} showed  that
twisted periodic algebras are selfinjective.

\begin{Thm}[{Green, Snashall, Solberg \cite[Theorem~1.4]{GSS03}}]\label{thm:GSS}
Assume that $A$ is connected and not semisimple.
For $m \ge 1$, 
the following are equivalent:
\begin{itemize}\itemsep2mm

\item[(i)]
$\Omega_A^m(A/J(A)) \cong A/J(A)$;

\item[(ii)]
$A$ is twisted $m$-periodic.

\end{itemize}
\end{Thm}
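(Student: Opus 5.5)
The statement to prove is the Green--Snashall--Solberg theorem (Theorem~\ref{thm:GSS}): for connected non-semisimple $A$, $\Omega_A^m(A/J(A)) \cong A/J(A)$ if and only if $A$ is twisted $m$-periodic. I would prove it by comparing the minimal projective bimodule resolution of $A$ with the minimal projective resolution of $A/J(A)$ obtained by tensoring.

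\textbf{Step 1: (ii) $\implies$ (i).} Take a minimal projective resolution $\cdots \to P_1 \to P_0 \to A \to 0$ in $\md(A^e)$. Each term is a direct sum of modules $Ae_i \otimes e_jA$, and the resolution splits as a sequence of right $A$-modules. Hence applying $-\otimes_A M$ gives a projective resolution of ${_A}M$, and one gets $\Omega^m_{A^e}(A)\otimes_A M \cong \Omega^m_A(M)$ up to projective summands. Take $M = A/J(A)$. The resolution obtained from a minimal bimodule resolution stays minimal after tensoring with $A/J(A)$, because the image of $P_{i+1}\to P_i$ lies in $J(A^e)P_i$. So $\Omega^m_A(A/J(A)) \cong {_\sigma}A_1\otimes_A A/J(A) \cong {_\sigma}(A/J(A))$. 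Twisting by an automorphism permutes the simples, so this is isomorphic to $A/J(A)$.

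\textbf{Step 2: (i) $\implies$ (ii), selfinjectivity.} By (i), every simple module has $\Omega^m(S)$ simple (a summand of $A/J$), and then $\Omega^{2m}, \Omega^{3m},\dots$ are simple too. So the resolutions of all simples are eventually periodic and of bounded size. I would derive selfinjectivity from the identity
$$
\Ext^{i}_A(A/J,A/J)\cong \Ext^{i+m}_A(A/J,A/J)
$$
together with a dimension shift. If some $\Omega^m(S)$ were projective, then $\Omega^{m+1}(S)=0$, which contradicts $\Omega^m(S)$ being a nonzero simple nonprojective. The latter holds because $A$ is connected and not semisimple, so simples have infinite projective dimension after the shift. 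One must still rule out non-selfinjective algebras with this property. The standard route is to show that $\Omega^m$ induces a stable autoequivalence permuting simples, and from that that $\idim A<\infty$ and hence (Gorenstein) that $A$ is selfinjective.

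\textbf{Step 3: (i) $\implies$ (ii), lifting to bimodules.} Let $\Omega:=\Omega^m_{A^e}(A)$, which is projective as a left and as a right module. By Step 1, $\Omega\otimes_A S\cong \Omega_A^m(S)$, a simple module, for each simple $S$. I would show that $\Omega$ is an invertible bimodule. The functor $\Omega\otimes_A -$ sends simples to simples bijectively and has the exact right adjoint $\Hom_A(\Omega,-)$, so it is an exact autoequivalence of $\md(A)$. An invertible bimodule that is projective on the right and whose right structure is free of rank one is isomorphic to ${_\sigma}A_1$. The rank condition comes from the dimension vector being preserved up to the permutation, so that $\Omega_A \cong A_A$.

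\textbf{Main obstacle.} I expect Step 2 together with the invertibility argument in Step 3 to be the hard part. There one needs the comparison of dimensions, $\dim \Omega = \dim A$, and that $\Omega\otimes_A-$ is fully faithful. I would first try to get this from Morita theory: a bimodule $X$ that is a progenerator on the right with $\End(X_A)\cong A$ via the left action is invertible. The isomorphism $\End(X_A)\cong A$ would be checked on tops using the simple-to-simple correspondence.
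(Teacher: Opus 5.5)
The paper gives no proof of this statement (it is quoted from Green--Snashall--Solberg), so I judge your sketch on its own. Step~1, the direction (ii)$\implies$(i), is correct. The direction (i)$\implies$(ii) breaks at Step~2, and not repairably. From $\Omega_A^m(A/J)\cong A/J$ you cannot conclude that each $\Omega_A^m(S)$ is simple, nor that $A$ is selfinjective.

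Here is a counterexample. Let $Q$ have a loop $x$ at $1$ and an arrow $\alpha\colon 1\to 2$, and let $I=(x^2,\alpha x)$. Then $A$ is connected, radical square zero and not semisimple, with $\operatorname{rad}P(1)\cong S(1)\oplus S(2)$ and $P(2)=S(2)$. Hence $\Omega_A(A/J)=J\cong A/J$, so (i) holds for every $m\ge 1$. But $P(2)=S(2)$ is projective and not injective, since its injective envelope $D(e_2A)$ is two-dimensional. So $A$ is not selfinjective, and therefore not twisted periodic. To see the last point, tensor the bimodule resolution (which splits on the right) with $D(A_A)$: this embeds ${}_\sigma D(A_A)$ into a projective module, which forces $D(A_A)$ to be projective. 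This is the Lemma~1.5 of Green--Snashall--Solberg quoted before the statement.

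In this example $\Omega_A(S(1))$ has two summands and $\Omega_A(S(2))=0$. This is exactly the configuration your first sentence of Step~2 ignores. It also shows that $S(2)$ has projective dimension $0$, so your "infinite projective dimension" claim fails, and no argument can "rule out non-selfinjective algebras". The statement as formulated needs selfinjectivity as an extra hypothesis. This is harmless for the paper, since the result is only applied in Proposition~\ref{prop:selfinjective}, where $A$ is selfinjective.

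With $A$ selfinjective, Step~2 becomes a one-liner. $\Omega_A^m$ is an autoequivalence of $\underline{\md}(A)$, so each $\Omega_A^m(S)$ is indecomposable and nonprojective. Krull--Schmidt applied to $\Omega^m_A(A/J)\cong A/J$ then shows that $S\mapsto\Omega_A^m(S)$ permutes the simples.

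Step~3 still needs a real argument, and checking on tops, as you propose, is not enough. An algebra endomorphism of $A$ inducing an automorphism of $A/J$ need not be injective (for instance, send an arrow to $0$). What works is a count using both one-sided projectivities of $X:=\Omega^m_{A^e}(A)$:
\begin{itemize}
\item Each $X\otimes_A S(k)\cong\Omega^m_A(S(k))$ is one-dimensional, so the top of $X_A$ equals that of $A_A$. Hence $X_A\cong A_A$, so $X\cong{}_\phi A_1$ for a unital algebra endomorphism $\phi$, and $\dim X=\dim A$.
\item Since $X/XJ\cong A/J$ is semisimple on the left, $JX\subseteq XJ$. As $e_i(X/XJ)\neq 0$, this gives $e_iX\neq e_iJX$ for all $i$. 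So every $P(i)$ is a summand of the projective module ${}_AX$.
\item The equality $\dim X=\dim A$ then forces ${}_AX\cong{}_AA$. This module is faithful, so $\phi$ is injective, hence an automorphism.
\end{itemize}
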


\subsection{Selfinjective $E$-symmetric algebras are twisted 
periodic}
For the proof of Proposition~\ref{prop:selfinjective}, we need the following easy and well known lemma.

\begin{Lem}\label{lem:stablehom}
Let $A$ be selfinjective, and let $S$ be a simple and non-projective 
$A$-module.
Then for each $M \in \md(A)$, we have
$$
\Hom_A(S,\tau_A(M)) = \underline{\Hom}_A(S,\tau_A(M)).
$$
\end{Lem}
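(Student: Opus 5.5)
The idea is to show that every morphism $f\df S \to \tau_A(M)$ that factors through a projective module is zero. Recall that $\underline{\Hom}_A(S,N) = \Hom_A(S,N)/\cP(S,N)$, where $\cP(S,N)$ is the space of morphisms factoring through a projective module. So the claimed equality amounts to $\cP(S,\tau_A(M)) = 0$. We may assume $M$ is indecomposable, since both sides are additive in $M$. If $M$ is projective, then $\tau_A(M) = 0$ and there is nothing to prove, so let $M$ be indecomposable and non-projective. Because $A$ is selfinjective, $\tau_A(M) \cong \Omega_A^2\nu_A(M)$ is then indecomposable and non-projective.

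Let $f\df S \to \tau_A(M)$ be nonzero and suppose $f = hg$ with $g\df S \to P$ and $h\df P \to \tau_A(M)$, where $P$ is projective. Since $S$ is simple, $g$ is injective. Because $A$ is selfinjective, $P$ is also injective, so $g$ factors through the injective envelope $I(S)$ of $S$, which is a direct summand of $P$. Hence we may assume $P = I(S)$ and that $g$ is the inclusion $S \hookrightarrow I(S)$.

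Now $I(S)$ is indecomposable projective-injective with simple socle $S$, and $hg = f \ne 0$ means $h$ does not vanish on $\soc(I(S))$. Any morphism out of a module with simple essential socle that is nonzero on the socle is injective, so $h$ is a monomorphism. Its source $I(S)$ is injective, so $h$ splits, and $I(S)$ becomes a direct summand of $\tau_A(M)$. Since $\tau_A(M)$ is indecomposable, this forces $\tau_A(M) \cong I(S)$, i.e. $\tau_A(M)$ is projective. That contradicts the previous paragraph. Therefore every morphism $S \to \tau_A(M)$ factoring through a projective is zero, which gives the equality.

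The hypothesis that $S$ is non-projective is what prevents $I(S)$ from being simple; if $S$ were projective, $\tau_A(M)$ could still not be $S$, so this hypothesis really only matters for making the statement meaningful. The step needing most care is the reduction of an arbitrary projective middle term $P$ to $I(S)$; this uses the injectivity of projectives over a selfinjective algebra. An alternative route is to dualize via the Auslander--Reiten formula and use Lemma~\ref{lem:pdim1stable} together with the fact that $\tau_A(M)$ has no projective summands. The socle argument above seems cleanest.
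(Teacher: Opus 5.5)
Your proof is correct and complete. The paper gives no proof of this lemma (it calls it ``easy and well known''), so there is no argument of the authors to compare against, and your socle argument is the standard one. The key steps all hold:
\begin{itemize}
\item A nonzero $f$ factoring through a projective $P$ gives a monomorphism $S\to P$.
\item Since $P$ is injective, $P$ may be replaced by a summand isomorphic to $I(S)$.
\item A map $I(S)\to\tau_A(M)$ that is nonzero on the simple essential socle is injective, hence split, because $I(S)$ is injective.
\item This contradicts the fact that $\tau_A(M)$ has no nonzero projective-injective summands.
\end{itemize}

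Two minor remarks. First, the reduction to indecomposable $M$ is harmless but unnecessary. For every $M$, the module $\tau_A(M)$ has no nonzero injective summands, and these are the same as projective summands since $A$ is selfinjective. So the split-mono argument applies directly, and in fact it shows that no nonzero map from a simple module to any module without projective summands factors through a projective.

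Second, the hypothesis that $S$ is non-projective is never used. If $S$ were projective, then $S=I(S)$ and the same argument gives $\Hom_A(S,\tau_A(M))=0=\underline{\Hom}_A(S,\tau_A(M))$. Your closing comment that this hypothesis ``prevents $I(S)$ from being simple'' is therefore beside the point rather than an ingredient of the proof. In the paper's application the simple modules are non-projective anyway, because $A$ is assumed connected and not semisimple.
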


\begin{Prop}\label{prop:selfinjective}
Let $A$ be selfinjective and $E$-symmetric.
We also assume that $A$ is connected and not semisimple.
Then the following hold:
\begin{itemize}\itemsep2mm

\item[(i)]
For each simple $A$-module $S$, we have $\tau_A^2(S) \cong \nu_A(S)$;

\item[(ii)]
$A$ is twisted $4$-periodic.

\end{itemize}
\end{Prop}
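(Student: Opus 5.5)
The plan is to prove (i) by comparing $E$ and $E^-$ on direct sums $M = S \oplus X$, so that $E$-symmetry yields an identity about maps factoring through projectives. Then (ii) follows formally from (i) together with Theorem~\ref{thm:GSS}.

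\textbf{Reduction of (ii) to (i).} Since $A$ is selfinjective, we have $\tau_A \cong \Omega_A^2\nu_A \cong \nu_A\Omega_A^2$ on $\underline{\md}(A)$, and $\nu_A$ commutes with $\Omega_A$. Hence $\tau_A^2(S)\cong\nu_A(S)$ reads $\Omega_A^4\nu_A^2(S)\cong \nu_A(S)$. Writing $T=\nu_A(S)$, this says $\Omega_A^4(T)\cong\nu_A^{-1}(T)$. As $\nu_A$ permutes the simple modules, (i) for all simple $S$ is equivalent to $\Omega_A^4(T)\cong \nu_A^{-1}(T)$ for all simple $T$. This is the form stated in Proposition~\ref{prop:introselfinjective}. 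Since $A=KQ/I$ is basic, $A/J(A)$ is the direct sum of all simples, each occurring once, and $\nu_A^{-1}$ permutes these summands. Therefore $\Omega_A^4(A/J(A))\cong A/J(A)$, and Theorem~\ref{thm:GSS} gives that $A$ is twisted $4$-periodic. Note that $A$ non-semisimple and connected means no simple module is projective, so Lemma~\ref{lem:stablehom} applies to every simple $S$.

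\textbf{Key identity.} Let $X$ be indecomposable non-projective. Expand $E(S\oplus X)=E^-(S\oplus X)$ into four terms on each side. The diagonal terms cancel by $E$-symmetry of $S$ and of $X$, leaving
$$\hom_A(S,\tau_AX)+\hom_A(X,\tau_AS)=\hom_A(\tau_A^-S,X)+\hom_A(\tau_A^-X,S).$$
By Lemma~\ref{lem:stablehom}, $\hom_A(S,\tau_AX)=\dim\underline{\Hom}_A(S,\tau_AX)=\dim\underline{\Hom}_A(\tau_A^-S,X)$, using that $\tau_A$ is a stable equivalence. Dually, $\hom_A(\tau_A^-X,S)=\dim\underline{\Hom}_A(X,\tau_AS)$; for selfinjective $A$, stable and costable Hom agree. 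Writing $\mathcal P(U,V)$ for the space of maps factoring through a projective, we obtain
$$\dim\mathcal P(X,\tau_AS)=\dim\mathcal P(\tau_A^-S,X)\quad\text{for all } X \in \md(A).$$
The same identity holds for all $X$, by additivity and because both sides are trivial when $X$ is projective.

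\textbf{Exploiting the identity (main obstacle).} Next I would evaluate both sides on well-chosen $X$ to pin down $\tau_A^2(S)$. Nonzero maps $U\to P\to V$ with $P$ indecomposable projective-injective are controlled by socles and tops. A map $U\to P(L)$ with nonzero composite into $V$ must be nonzero on $\soc P(L)$, and it must hit outside $\operatorname{rad}P(L)$ only if it splits. Thus $\dim\mathcal P(U,V)$ should be expressible through multiplicities of $\soc$ and $\tp$ data of $U$ and $V$, twisted by $\nu_A$, since $\soc P(L)\cong\nu_A(L)$.

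First I would take $X=\tau_AS$ and $X=\tau_A^2S$, and also $X=P(L)/\soc P(L)$ and $X=\operatorname{rad}P(L)$ for simple $L$. For $X=\tau_AS$, combining with stable $\End(\tau_AS)\cong K$ gives $\dim\mathcal P(\tau_AS,\tau_AS)=\dim\mathcal P(\tau_A^-S,\tau_AS)$. I expect the resulting equalities, ranging over all $L$, to force $\tau_A^{-}S$ and $\tau_AS$ to have "the same" socle/top data up to $\nu_A$. From there I want to conclude that $\tau_A^2(S)$ is simple with $\tau_A^2(S)\cong\nu_A(S)$.

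Turning the dimension equalities into an actual isomorphism is the step I expect to be hardest. If the naive choices do not suffice, I would instead compare $\mathcal P(X,\tau_AS)$ with $\mathcal P(X,\tau_A(\nu_A^{-1}\tau_A^2 S))$ for all $X$ and use that these dimension functions determine the module up to projective summands.
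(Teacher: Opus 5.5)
\textbf{There is a genuine gap in (i).} Your reduction of (ii) to (i) is correct and matches the paper. But the part of (i) you call the main obstacle is only a plan: nothing in it actually produces an isomorphism $\tau_A^2(S)\cong\nu_A(S)$.

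Moreover, the case you discard is exactly the one that carries the decisive information. For projective $X$ the two sides of your identity are $\hom_A(X,\tau_AS)$ and $\hom_A(\tau_A^-S,X)$. These are not zero in general, so ``both sides are trivial'' is false. The identity still holds, but only because $E$-symmetry applied to $S\oplus X$ gives it. In fact, for any $N\in\md(A)$ and any projective $P$ you have $\tau_A(P)=0=\tau_A^-(P)$, since $P$ is also injective. Hence
$E(P\oplus N)=\hom_A(P,\tau_AN)+E(N)$ and $E^-(P\oplus N)=\hom_A(\tau_A^-N,P)+E^-(N)$,
and $E$-symmetry yields $\hom_A(P,\tau_AN)=\hom_A(\tau_A^-N,P)$. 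With $N=S$ this only says $\dimv(\tau_AS)=\dimv(\nu_A\tau_A^-S)$. That is why you end up trying to upgrade dimension data of non-simple modules to an isomorphism.

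\textbf{The missing idea} is to apply this one step further along the $\tau$-orbit. Take $N=\tau_A(S)$ and $P=P(T)$, i.e.\ $M=P(T)\oplus\tau_A(S)$, which is the paper's choice. Then $\tau_A(M)\cong\tau_A^2(S)$ and $\tau_A^-(M)\cong S$, and you get
$[\tau_A^2(S):T]=\hom_A(P(T),\tau_A^2(S))=\hom_A(S,P(T))$.
This is $1$ if $S\cong\soc P(T)\cong\nu_A^{-1}(T)$ and $0$ otherwise. Since $\tau_A^2(S)\neq0$, its only composition factor is $\nu_A(S)$, with multiplicity one, so $\tau_A^2(S)\cong\nu_A(S)$. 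The dimension-vector equality is already the isomorphism, because the target is simple.

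The paper writes this as an inequality, bounding $\hom_A(\tau_AS,\tau_A^2S)$ from below by $\hom_A(S,\tau_AS)$ via Lemma~\ref{lem:stablehom}. Using $E$-symmetry of $\tau_A(S)$ itself gives the equality directly.

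None of your analysis of $\mathcal P(X,\tau_AS)$ for non-projective $X$ is needed. Also, your heuristic that $\dim\mathcal P(U,V)$ is governed by socle and top data is false in general. Over $K[x]/(x^n)$, every $U_m=K[x]/(x^m)$ with $1\le m<n$ has top and socle $S$, yet $\dim\mathcal P(U_m,U_m)=\max(0,2m-n)$.
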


\begin{proof}
(i):
Assume that $S$ and $T$ are simple $A$-modules.
Then they are non-projective, since $A$ is connected and not semisimple. In particular $\tau_A^m(S) \not= 0$ for all $m \in \Z$.

Let $P := P(T)$ and $M := P \oplus \tau_A(S)$.
Note that $\nu_A^{-1}(T) \cong \soc(P)$.
Then
$$
\tau_A(M) \cong \tau_A^2(S) 
\text{\quad and \quad}
\tau_A^-(M) \cong S.
$$
We get
\begin{align*} E(M) =
\hom_A(M,\tau_A(M)) &= [\tau_A^2(S):T] + 
\hom_A(\tau_A(S),\tau_A^2(S))
\\
&\ge  [\tau_A^2(S):T] + \dim \overline{\Hom}_A(\tau_A(S),\tau_A^2(S))
\\
&= [\tau_A^2(S):T] + \dim \underline{\Hom}_A(S,\tau_A(S))
\\
&= [\tau_A^2(S):T] + \hom_A(S,\tau_A(S)).
\end{align*}
(For the last equality, we used Lemma~\ref{lem:stablehom}.)
Furthermore, we have
$$
E^-(M) = \hom_A(\tau_A^-(M),M) = \hom_A(S,P) + \hom_A(S,\tau_A(S)). 
$$
Note that
$$
\hom_A(S,P) =
\begin{cases}
1 & \text{if $S \cong \nu_A^{-1}(T)$},
\\
0 & \text{otherwise}.
\end{cases}.
$$
Since $E(M) = E^-(M)$,
we get 
$$
\hom_A(S,P) \ge [\tau_A^2(S):T].
$$
Therefore
$$
[\tau_A^2(S):T] =
\begin{cases}
1 & \text{if $S \cong \nu_A^{-1}(T)$},
\\
0 & \text{otherwise}.
\end{cases}.
$$
(We used here that $\tau_A^2(S) \not= 0$.)
Thus $\tau_A^2(S) \cong \nu_A(S)$.

(ii):
Since $A$ is selfinjective,
we have functorial isomorphisms
$$
\tau_A \cong \Omega_A^2 \circ \nu_A \cong \nu_A \circ \Omega_A^2\df
\underline{\md}(A) \to \underline{\md}(A).
$$
Thus the condition $\tau_A^2(S) \cong \nu_A(S)$ is equivalent to $\Omega_A^4(S) \cong \nu_A^{-1}(S)$.
Now Theorem~\ref{thm:GSS} says that $A$ is twisted $4$-periodic.
\end{proof}

There are several related results for $2$-Calabi-Yau tilted algebras
and selfinjective Jacobian algebras.
We refer to
Valdivieso-D{\'i}az \cite[Proposition~2.1]{V16},
Garc{\'i}a Elsener--Schiffler \cite[Corollary~3.6]{GES17} and
Jasso--Muro \cite[Proposition~2.2.19]{JM27} for details.

\bigskip

\subsection*{Statement on AI usage}
We used the AI model ChatGPT Pro to find the module $M$ at the end of the proof of Theorem~\ref{thm:monomial3}.
(With one hour of trying, we would have found it by ourselves...)
This is the only place in the paper where AI was used.

\subsection*{Acknowledgements}
JS was partially funded by the Deutsche Forschungsgemeinschaft
(DFG, German Research Foundation) under Germany's Excellence Strategy - GZ 2047/1, Projekt-ID 390685813.
He also thanks the
Faculty of Mathematics and Computer Science
of the Nicolaus Copernicus University in Toru\'n for 10 days of hospitality in September 2026.


\end{document}